\documentclass[a4paper]{amsart}
\usepackage{amsmath,amsthm,amssymb}
\usepackage{graphicx} 
\usepackage{tcolorbox}
\usepackage{tikz-cd}
\usetikzlibrary{arrows.meta,positioning,decorations.markings}
\usepackage[margin=1in]{geometry}
\usepackage{enumerate}
\usepackage{hyperref}

\DeclareMathOperator{\bbz}{\mathbb{Z}}

\DeclareMathOperator{\bbf}{\mathbb{F}}

\newcommand{\FFalgebra}[1]{\bbf_p[\![#1]\!]}

\title{Revisiting fully residually free Demushkin groups}
\author{Henrique Souza and Pavel Zalesskii}
\date{\today}

\newtheorem{thm}{Theorem}[section]
\newtheorem{prop}[thm]{Proposition}
\newtheorem{lem}[thm]{Lemma}
\newtheorem{cor}[thm]{Corollary}
\theoremstyle{definition}

\newtheorem{defn}[thm]{Definition}
\newtheorem{question}[thm]{Question}
\newtheorem{rmk}[thm]{Remark}

\newtcolorbox{todobox}{colback=red!5!white,colframe=red!75!black}

\newtcolorbox{henbox}{colback=red!5!white,colframe=red!75!black,title=Henrique}
\newtcolorbox{abox}{colback=blue!5!white,colframe=blue!75!black,title=Andrei}
\newtcolorbox{pbox}{colback=green!5!white,colframe=green!75!black,title=Pavel}

\begin{document}

\begin{abstract}
    We establish new examples and non-examples of pro-\(p\) limit groups among the class of Demushkin groups, that is, pro-\(p\) Poincaré duality groups of dimension \(2\).
\end{abstract}

\maketitle

\section{Introduction}

The class of limit groups is a well studied class of finitely generated discrete groups that admits various descriptions:
\begin{itemize}
    \item \(G\) is a limit group if it is \emph{fully residually free}, that is, given any finite subset \(X \subseteq G\), there exists a homomorphism \(\varphi\colon G \to F\) where \(F\) is a non-abelian free group whose restriction to \(X\) is injective.
    \item \(G\) is a limit group if it has the same \emph{existential theory} of a free group, that is, the set of first order sentences in the language of groups that only use the existential quantifier \(\exists\) that hold in \(G\) are precisely the ones that hold in a non-abelian free group.
    \item \(G\) is a limit group if it is a subgroup of an iterated sequence of \emph{centralizer extensions} starting from free groups, that is, free products with amalgamation \(H *_C (C \times \bbz^n)\) where \(C\) is a cyclic self-centralized subgroup of \(H\).
\end{itemize}
This class of groups attracted a lot of attention for their role in the solution of the Tarski problem (\cite{kharlampovichElementaryTheoryFree2006,selaDiophantineGeometryGroups2006}), as well as their interesting group-theoretic properties (\cite[Prop. 1.1]{alibegovicLimitGroupsAre2006}). A non-trivial example of a non-free limit group is the fundamental group of an orientable closed surface, as well as the fundamental group of a non-orientable closed surface of Euler characteristic smaller than \(-1\).

The study of the pro-\(p\) analogues of limit groups goes back to the works of D. Kochloukova and the second author. In \cite{kochloukovaPropAnaloguesLimit2011}, they introduced the class \(\mathcal{L}\) of finitely generated pro-\(p\)  groups defined recursively as follows: \(\mathcal{L}\) contains the class \(\mathcal{G}_0\) of finitely generated free pro-\(p\) groups, and for all \(n > 0\) it contains all finitely generated subgroups of the class \(\mathcal{G}_n\) of centralizer extensions \(G_{n-1} \amalg_C A\) where \(G_{n-1} \in \mathcal{G}_{n-1}\), \(C\) is a self-centralizing procyclic subgroup of \(G_{n-1}\) and \(A \simeq \bbz_p^k\) is a free abelian pro-\(p\) group of finite rank containing \(C\) as a direct summand. The smallest \(n\) for which a \(G\) in \(\mathcal{L}\) embeds in a group of \(\mathcal{G}_n\) is called the \emph{weight} of \(G\). For now, we shall call the pro-\(p\) groups in the class \(\mathcal{L}\) \emph{pro-\(p\) limit groups}. 

The pro-\(p\) limit groups share many interesting group-theoric properties with their discrete counterparts: they are free-by-(torsion-free nilpotent), have finite cohomological dimension, are of type \(FP_\infty\) and have non-positive Euler characteristic, they are either abelian or such that any finitely generated closed normal subgroup is open and any \(2\)-generated subgroup is free pro-\(p\) or isomorphic to \(\bbz_p^2\). In \cite{kochloukovaPropAnaloguesLimit2011}, they listed a few properties of abstract limit groups that are not yet clear for their pro-\(p\) analogues. Amongst them, we point out a particular one:

\begin{question}\label{quest-free-pro-p} Is every pro-\(p\) limit group residually free pro-\(p\)?
\end{question}

We also point out that it is not known in the pro-\(p\) setting whether every fully residually free pro-\(p\) group is a pro-\(p\) limit group and vice-versa. We are not aware of any study on the existential theory of free pro-\(p\) groups. Also, in contrast with the discrete case, much less is known in terms of examples. For instance:

\begin{question} Let \(\Gamma\) be an orientable surface group of genus \(g\) and \(G = \widehat{\Gamma}_p\) be its pro-\(p\) completion. Is \(G\) a pro-\(p\) limit group?
\end{question}

In \cite{kochloukovaPropAnaloguesLimit2011}, the answer as shown to be yes provided the genus \(g\) is even, in which case they also prove that these groups are fully residually free pro-\(p\). The purpose of this short note is to provide a way to generalize this class of examples. We recall that a Demushkin group is a pro-\(p\) Poincaré duality group of dimension \(2\), and we refer the reader to Section~\ref{sec-demushkin} for the precise definition of the torsion-invariant and the orientation character.

\begin{thm}\label{thm-demushkin-is-limit} Let \(G\) be a pro-\(p\) Demushkin group with  minimal number of generators \(n\), torsion-invariant \(q \in \{0\,,p\,,p^2\,,\cdots\}\) and orientation character \(\chi\colon G \to \mathbb{Z}_p^\times\).
\begin{enumerate}
    \item If \(n\) is divisible by \(4\), then \(G\) is a pro-\(p\) limit group.
    \item If \(p=2\) and \(n > 2\) is even, then \(G\) is a pro-\(2\) limit group.
\end{enumerate}
Moreover, in both cases above, \(G\) is (fully) residually free pro-\(p\).
\end{thm}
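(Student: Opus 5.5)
I would follow the approach of Kochloukova and Zalesskii for even-genus orientable surfaces. The idea is to realize \(G\) as an amalgamated free product of two free pro-\(p\) groups over a procyclic subgroup, and then embed this into a centralizer extension of weight one.

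\textbf{Splitting.} Let \(n=2m\). By Labute's classification (recalled in Section~\ref{sec-demushkin}), \(G\) has a one-relator presentation whose relator is determined by \(n\), \(q\) and \(\chi\). When \(q\neq 2\), it has the form \(x_1^{q}[x_1,x_2][x_3,x_4]\cdots[x_{n-1},x_n]\). The exceptional cases with \(p=2\) have relators such as \(x_1^2x_2^{2^f}[x_2,x_3]\cdots\) or \(x_1^{2+2^f}[x_1,x_2]\cdots\). I would split the relator as \(r = u\,v^{-1}\), where \(u\) involves only the first \(m\) generators and \(v\) only the last \(m\). For the standard relator with \(4\mid n\), the first half \(u=x_1^q[x_1,x_2]\cdots[x_{m-1},x_m]\) ends with a complete commutator, because \(m\) is even. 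The second half \(v\) is then a product of commutators in \(x_{m+1},\dots,x_n\). This gives
\[G \cong F(x_1,\dots,x_m)\amalg_{u=v} F(x_{m+1},\dots,x_n).\]
For \(p=2\) with only \(n\) even (so \(m\) may be odd), I would first apply a change of generators. Over \(\bbz_2\), the squares \(x^2\) and commutators can be traded, as in the non-orientable surface identity \(a^2b^2c^2\sim a^2[b,c]\). The goal is to rewrite the relator so that each half is a primitive-free element of its free factor, i.e.\ not a proper power, possibly shifting a square term across.

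\textbf{Embedding into a centralizer extension.} Write \(F_1=F(x_1,\dots,x_m)\) and fix an isomorphism \(\psi\colon F_1\to F_2=F(x_{m+1},\dots,x_n)\) with \(\psi(u)=v\). I need \(u\) and \(v\) to have the same "type": both not proper powers, and \(v\) in the \(\mathrm{Aut}(F)\)-orbit of \(u\). This is exactly where \(4\mid n\) is used, since then both halves are products of \(m/2\) commutators up to the \(x_1^q\) twist. If that fails, I would instead use the standard trick: \(G=F_1\amalg_{u=v}F_2\) embeds into the centralizer extension
\[F_1\amalg_{\langle u\rangle}(\langle u\rangle\times\langle t\rangle)\]
via \(F_2\mapsto F_1^{t}\), provided \(F_2\cong F_1\) by an isomorphism sending \(v\) to \(u\). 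One also needs \(\langle u\rangle\) to be self-centralizing in \(F_1\), which holds because \(u\) is not a proper power in a free pro-\(p\) group. The map is the double, \(G\to F_1\amalg_{u}F_1^{t}\subseteq F_1\amalg_{\langle u\rangle}\bbz_p^2\). Its injectivity follows from the pro-\(p\) version of the subgroup theorem for amalgams, using that the edge group is procyclic and maximal. This puts \(G\) in \(\mathcal{L}\) with weight \(1\).

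\textbf{Full residual freeness.} The centralizer extension \(F_1\amalg_{\langle u\rangle}\bbz_p^2\) retracts onto \(F_1\) via the maps \(t\mapsto u^{\lambda}\). Given a finite set \(X\subset G\), I would show that for suitable \(\lambda\in\bbz_p\) (for instance \(\lambda\) highly divisible by \(p\) but nonzero) the composite \(G\to F_1\) is injective on \(X\). This is a pro-\(p\) Baumslag-type lemma: for a reduced word \(g_0t^{a_1}g_1\cdots\) with \(g_i\notin\langle u\rangle\), the element \(g_0u^{\lambda a_1}g_1\cdots\) is nontrivial for \(\lambda\) \(p\)-adically close to a generic value. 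I would prove it by passing to finite quotients and a continuity/compactness argument in \(\lambda\). This step is the main obstacle: the discrete Baumslag argument uses large integer exponents, and its pro-\(p\) replacement requires careful control.

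For \(p=2\) with \(m\) odd, the hard step is instead the change of generators that makes the two halves isomorphic. I would first try using the relation \(x^2y^2\) versus \([x,y]\)-type identities, together with the fact that \(\chi\) is nontrivial or \(q\) is arbitrary in the pro-\(2\) case.
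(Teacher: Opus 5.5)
Your embedding step is exactly Corollary~\ref{corollary-embed-ce}: a genuine double $F\amalg_{r=\widetilde r}\widetilde F$ sits in $F\amalg_{\langle r\rangle}(\langle r\rangle\times\mathbb{Z}_p)$ as the subgroup generated by $F$ and a $t$-conjugate of $F$. The problem is that your proposal never actually produces a double.

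\textbf{Case (1) fails as soon as $q\neq 0$.} Cutting the standard relator $r_1(n,q,\mathbb{U})$ in half gives $u=x_1^q[x_1,x_2]\cdots[x_{m-1},x_m]$ and a $v$ that is a pure product of commutators. No isomorphism $F_1\to F_2$ sends $u$ to a conjugate of $v^{\pm 1}$. The quotient of $F_1$ by the closed normal subgroup generated by $u$ has abelianization $\mathbb{Z}_p^{m-1}\times\mathbb{Z}_p/q\mathbb{Z}_p$, while the corresponding quotient of $F_2$ has abelianization $\mathbb{Z}_p^{m}$; a similar count rules out the exceptional $p=2$ relators. So the ``$x_1^q$ twist'' is precisely the obstruction. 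Your fallback ``standard trick'' presupposes the very isomorphism $v\mapsto u$ that does not exist. As written, you only recover orientable surface groups of even genus, which were already known.

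The missing idea is Theorem~\ref{thm-double}: do not split $r_1(n,q,\mathbb{U})$ at all. Instead, form the double of $r=r_1(n/2,q,\mathbb{U})$, whose relator $r\widetilde r^{-1}$ carries a twist in each half. Read off $n$ generators and torsion invariant $q$ from the cup-product matrix (Proposition~\ref{prop-cup-product}), and compute $\operatorname{im}\chi=\mathbb{U}$ via Labute's characterization of $\chi$ by derivations. The classification (Theorem~\ref{thm-classification}) then identifies this double with $G$, so no explicit change of generators is needed.

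\textbf{Case (2).} For $n\equiv 2\pmod 4$ and $\mathbb{U}=\langle -1,1+2^f\rangle$, the double of $r_1(n/2,2,\mathbb{U})$ (odd-rank case) works. But when $r_1(n/2,q,\mathbb{U})$ is undefined (e.g.\ $q\neq 2$, or $\mathbb{U}=\langle -1+2^f\rangle$ with $f$ finite), the converse recorded in the remark after Theorem~\ref{thm-double} says $G$ is not a Baumslag double in any basis. So your planned change of generators cannot succeed.

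The paper (Corollary~\ref{cor-q-is-2}) instead writes $n=2d$ with $d-1=2^s(2l-1)$ and takes the Demushkin group $H$ with $4l$ generators and the same $\mathbb{U}$, which is covered by case (1). It shows that an open subgroup of index $2^s$, containing a generator whose $\chi$-value generates $\mathbb{U}$, is isomorphic to $G$. The Euler characteristic forces $2d$ generators, $\chi$ restricts so $\mathbb{U}$ is unchanged, and the classification concludes. Both properties pass to such subgroups.

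\textbf{Residual freeness.} The general pro-$p$ Baumslag lemma you plan to prove is not available; for arbitrary centralizer extensions it is essentially Question~\ref{quest-free-pro-p}. Continuity in $\lambda$ gives no control over where $g_0u^{\lambda a_1}g_1\cdots$ vanishes: as $\lambda\to 0$ it tends to $g_0g_1\cdots$, which may be trivial. There is no pro-$p$ substitute for the large-exponent cancellation argument.

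The paper instead applies Theorem~\ref{thm-desi-zalesskii}. This is possible because, after relabelling, inverting and conjugating, $r_1(n/2,q,\mathbb{U})$ has the form $[x_1,x_2]t$ with $t$ in the closed subgroup generated by $x_2,\dots,x_{n/2}$. ``Fully'' then follows from commutative transitivity, as in the paper's final remark.
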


In general, one cannot drop the assumption that \(n\) is even, as the pro-\(2\) Demushkin group \[D = \langle x_1,x_2,x_3\mid x_1^2[x_2,x_3]\rangle \simeq \langle a,b,c\mid a^2b^2c^2 = 1\rangle\] is neither a pro-\(2\) limit group nor residually free pro-\(2\). We will prove a stronger statement:

\begin{thm}\label{thm-three-gen-is-not} Any homomorphism from \(D\) to a pro-\(2\) limit group has abelian image.
\end{thm}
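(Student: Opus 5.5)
The plan is to prove the pro-\(2\) analogue of Lyndon's theorem: if \(x,y,z\) lie in a pro-\(2\) limit group \(L\) and \(x^2y^2z^2=1\), then \(x,y,z\) pairwise commute. Applying this to the images of \(a,b,c\) under a homomorphism \(D\to L\) gives Theorem~\ref{thm-three-gen-is-not}. Since \(\mathcal{L}\) is closed under finitely generated subgroups, we may replace \(L\) by the image of \(D\), which is generated by three elements.

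\emph{Tools from \cite{kochloukovaPropAnaloguesLimit2011}.} Pro-\(2\) limit groups are torsion-free. Every \(2\)-generated subgroup is free pro-\(2\) or isomorphic to \(\bbz_2^2\). From this I would derive two facts.

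First, \emph{isolation of commutation}: if \(u^2\) commutes with \(v\) and \(u\neq 1\), then \(u\) commutes with \(v\). Indeed, \(\langle u,v\rangle\) is either abelian or free pro-\(2\). In a free pro-\(2\) group centralizers are procyclic and isolated, so \(u\) and \(v\) would lie in a common procyclic subgroup.

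Second, \emph{commutative transitivity}, which follows in the same way.

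\emph{The abelian case.} Suppose \(H=\langle x,y\rangle\) is abelian. Then \(z^{-2}=x^2y^2\) commutes with \(x\) and with \(y\). By isolation (treating \(z=1\) separately via torsion-freeness), \(z\) commutes with \(x\) and \(y\), so the image is abelian. Hence the remaining case is \(H\) free pro-\(2\) of rank \(2\) with basis \(\{x,y\}\), and we must derive a contradiction.

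\emph{Induction on the weight.} I would argue by induction on the weight of \(L\), viewing \(L\le G_m=G_{m-1}\amalg_C A\). Then \(L\) acts on the standard pro-\(2\) tree \(T\) of this amalgam. Vertex stabilizers are conjugates of \(G_{m-1}\) and of \(A\), and edge stabilizers are conjugates of the procyclic group \(C\).

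If the image of \(D\) fixes a vertex, it lies in a conjugate of \(G_{m-1}\) (so induction applies) or of \(A\) (so it is abelian).

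Otherwise the image of \(D\) acts on \(T\) without a global fixed point. Pulling this action back gives an action of \(D\) on a pro-\(2\) tree with procyclic edge stabilizers and no global fixed point. The aim is then to show that each of \(a,b,c\) fixes a vertex and to pin down how they can move. The square relation should help here: an element and its square have the same fixed subtree in a torsion-free setting. The goal is to force either a common fixed vertex or, using the abelian edge groups, that \(x,y,z\) all lie in a single conjugate of \(A\) or of \(C\).

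\emph{Main obstacle.} The hard step is the last one: controlling fixed-point-free actions of \(D\) on pro-\(2\) trees with procyclic edge stabilizers. In the discrete case this is the classical analysis of splittings of the genus-\(3\) non-orientable surface group along curves. In the pro-\(2\) setting one only has the profinite Bass–Serre theory for \(2\)-generated or procyclic-stabilizer situations.

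My first attempt would avoid general splittings of \(D\). I would use the concrete structure instead: \(H=\langle x,y\rangle\) is free, and \(x^2y^2\) is a square \(z^{-2}\). Passing to the quotient of \(G_m\) that kills \(A\) modulo \(C\) (the retraction \(G_m\to G_{m-1}\) sending \(A\) onto \(C\)) keeps us inside weight \(m-1\). I would then check that this retraction is injective on the relevant \(2\)-generated subgroups, so that a nonabelian image survives, which contradicts the inductive hypothesis.

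If that injectivity fails, the fallback is a mod-\(2\) cohomological argument. The cup-product form on \(H^1(D,\bbf_2)\) is non-alternating, because of the \(x_1^2\) term in the relation. I would compare it with the restrictions imposed on \(H^1(L,\bbf_2)\) by the inflation map for a nonabelian image.
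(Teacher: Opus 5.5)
Your proposal does not yet prove the theorem. The case you flag as the main obstacle, namely \(H=\varphi(D)\) acting on the Bass--Serre tree of \(G_m=G_{m-1}\amalg_C A\) without a global fixed point, is the entire content of the inductive step, and neither route you suggest closes it.

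The retraction \(\pi\colon G_m\to G_{m-1}\) is far from injective on \(2\)-generated subgroups. In the centralizer extension \(F\amalg_{\langle r\rangle}(\langle r\rangle\times\langle a\rangle)\) of Corollary~\ref{corollary-embed-ce}, \(\pi\) restricted to the Demushkin double \(\langle F,aFa^{-1}\rangle\) is the folding map onto \(F\). So it identifies \(x_1\) with \(ax_1a^{-1}\), although these two elements do not commute. Injectivity on \(\langle x,y\rangle\) could therefore only come from an essential use of the relation, and you give no mechanism for this. The cup-product fallback is only a suggestion: the isotropy argument behind Sonn's bound uses \(H^2(F,\mathbb{F}_2)=0\) for a free target, which fails for non-free limit groups.

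There are also smaller problems:
\begin{itemize}
\item Pulling the action back to \(D\) gives edge stabilizers containing \(\ker\varphi\), so they are not procyclic unless \(\varphi\) is injective. The action to analyse is that of \(H\).
\item \(g\) and \(g^2\) need not have the same fixed subtree in a torsion-free group. In \(\mathbb{Z}_2\amalg_{2\mathbb{Z}_2}\mathbb{Z}_2\), a generator of a factor fixes one vertex while its square fixes an edge.
\item The weight-\(0\) case, i.e.\ Proposition~\ref{prop-lyndon}, is never proved. Your reduction to \(\langle x,y\rangle\) free of rank \(2\) still needs an argument inside a free pro-\(2\) group; the paper uses a Frattini argument with a map to \(\mathrm{GL}_3(\mathbb{Z}/4\mathbb{Z})\).
\end{itemize}

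The missing idea is to use the inductive hypothesis on \(\pi\circ\varphi\) only where \(\pi\) is injective. The map \(\pi\) embeds each \(H\cap gG_{m-1}g^{-1}\) into the abelian group \(\pi(\varphi(D))\), so all vertex stabilizers of \(H\) on the Bass--Serre tree are abelian (those of type \(gA\) trivially so). The action is \(2\)-acylindrical with procyclic edge stabilizers (Lemma~\ref{lem-action-acylindrical}). By Proposition~\ref{prop-finitely-many-stabilizers} and Theorem~\ref{thm-chatzidakis}, the minimal \(H\)-invariant subtree \(\Gamma\) then has finite quotient \(\Delta\), and \(H\) is the fundamental group of a finite graph of abelian groups with procyclic edge groups.

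An edge with trivial group would give one of two things:
\begin{itemize}
\item A free splitting of \(H\). Then \(H\) is free of rank \(2\) or \(3\), contradicting Proposition~\ref{prop-lyndon}, or \(H\cong\mathbb{Z}_2^2\amalg\mathbb{Z}_2\), which is impossible since \(D/[D,D]\cong\mathbb{Z}_2^2\times\mathbb{Z}/2\mathbb{Z}\) does not surject onto \(\mathbb{Z}_2^3\).
\item A bridge with a side of trivial fundamental group, which minimality of \(\Gamma\) forbids.
\end{itemize}
Hence all edge groups are non-trivial. At a vertex of type \(gG_{m-1}\), the abelian vertex group centralizes the non-trivial group \(H\cap gCg^{-1}\), so it is contained in \(gCg^{-1}\) because \(C\) is self-centralizing. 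Thus the edge group maps onto the vertex group.

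A second use of self-centralization shows that this vertex has valence one in \(\Delta\). If two edges ended there, one would get \(h\in G_{m-1}\) with \(hc^{k_1}h^{-1}=c^{k_2}\) for a generator \(c\) of \(C\), forcing \(h\in C\) and the two edges to coincide. Minimality of \(\Gamma\) rules out such pendant vertices. So \(\Delta\) is a single vertex of type \(gA\), and \(H\le gAg^{-1}\) is abelian.
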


Since \([x_2,x_3]\) is a non-trivial element of \(D\) that cannot be mapped non-trivially to a free pro-\(2\) group, \(D\) is not residually free pro-\(2\) nor it can be embedded in a pro-\(2\) limit group. In what follows, we first collect in Section~\ref{sec:prelim} some results on the pro-\(p\) analogue of Bass-Serre theory of groups acting that will be need in order to prove Theorems~\ref{thm-demushkin-is-limit} and~\ref{thm-three-gen-is-not}. Then, in Section~\ref{sec-demushkin}, we recall the definition of Demushkin groups and its associated invariants, as well as their classification in terms of said invariants. After some discusssion on procyclic splittings of Demushkin groups, both theorems are proven.

\section{Preliminaries on pro-\texorpdfstring{\(p\)}{p} groups acting on pro-\texorpdfstring{\(p\)}{p} trees}\label{sec:prelim}

In this section, we recall some notations and facts about the pro-\(p\) analogues of Bass-Serre theory of groups acting on trees that will be used throughout the paper. Our standard refences on the subject are Ribes-Zalesskii's Chapter in \cite{horizons} and  L. Ribes' book \cite{ribesProfiniteGraphsGroups2017}. 

A profinite graph is a compact, Hausdorff and totally disconnected space \(\Gamma\) endowed with a closed subspace \(V\Gamma\) of vertices and two continuous functions \(d_0,d_1\colon \Gamma \to V(\Gamma)\) extending the identity on \(V(\Gamma)\). The elements of \(E(\Gamma) =\Gamma \smallsetminus V(\Gamma)\) are called the edges of \(\Gamma\), \(d_0\) and \(d_1\) are called the incidence maps and for each edge \(e \in E(\Gamma)\) we say that \(d_0(e)\) is the beginning of \(e\) and \(d_1(e)\) is the end-point of \(e\). 
We will only consider profinite graphs where \(E(\Gamma)\) is also a closed subset of \(\Gamma\) -- observe that this property is preserved under passing to closed subgraphs (but not preserved by quotients).

A morphism \(\alpha\colon \Gamma \to \Delta\) of profinite graphs is a continuous map that commutes with \(d_0\) and \(d_1\). Any profinite graph is isomorphic to an inverse limit of finite discrete graphs. We say that a profinite graph is connected if all of its finite quotient graphs are connected. 

Associated to any profinite graph \(\Gamma\) we have a sequence of morphisms of free profinite \(\bbf_p\)-vector spaces:
\begin{equation}\label{short-sequence-graph}
    0 \to \FFalgebra{E(\Gamma)} \overset{d}{\to} \FFalgebra{V(\Gamma)} \overset{\epsilon}{\to} \bbf_p \to 0
\end{equation}
where \(\epsilon\) maps every element of \(V(\Gamma)\) to \(1 \in \bbf_p\) and \(d(e) = d_1(e) -  d_0(e)\) for each \(e \in E(\Gamma)\). A profinite graph is connected if and only if \(\ker(\epsilon) = \operatorname{im}(d)\) (\cite[Prop. 2.3.2]{ribesProfiniteGraphsGroups2017}), and we say that a connected profinite graph is a pro-\(p\) tree if the sequence (\ref{short-sequence-graph}) is exact. A finite graph is a pro-\(p\) tree if and only if it is a tree as an abstract graph. If \(\alpha\colon \Gamma \to \Delta\) is a surjective morphism of connected graphs with \(\Delta\) a finite graph and \(T\) is a maximal subtree of \(\Delta\), a fundamental \(0\)-section of \(\alpha\) (with respect to \(T\)) is a function \(\sigma\colon \Delta \to \Gamma\) such that \(\sigma_{\mid T}\) is an isomorphism of \(T\) onto its image and \(\partial_0(\sigma(e)) \in \sigma(\Delta)\) for all \(e \in E(\Gamma)\). A fundamental \(0\)-section always exists when \(\Delta = G\backslash \Gamma\) for some profinite group \(G\) acting on \(\Gamma\) with a finite quotient (\cite[Prop. 3.4.5]{ribesProfiniteGraphsGroups2017})

If \(\Delta\) is a finite graph, a (proper) graph \(\mathcal{G}\) of pro-\(p\) groups on \(\Delta\) is a map that attaches to each vertex \(v \in V(\Delta)\) a pro-\(p\) group \(\mathcal{G}(v)\) and to each edge \(e \in E(\Delta)\) a  pro-\(p\) group \(\mathcal{G}(e)\) together with two injective continuous homomorphisms \(\partial_0\colon \mathcal{G}(e) \to \mathcal{G}(d_0(e))\) and \(\partial_1\colon \mathcal{G}(e) \to \mathcal{G}(d_1(e))\). The fundamental group \(\pi_1(\mathcal{G},\Delta)\) of the graph of groups \(\mathcal{G}\) is defined as follows: we fix a maximal subtree \(T\) of \(\Delta\), and we take \(F\) to be the free pro-\(p\) product of all the vertex groups \(\mathcal{G}(v)\), \(v \in V(\Delta)\), with the free pro-\(p\) group on all the letters \(t_e\), \(e \in E(\Delta)\). Then, \(\pi_1(\mathcal{G},\Delta)\) is defined as the quotient \(F/N\) where \(N\) is the closed normal subgroup of \(F\) generated by all the elements \(t_e\) for \(e \in E(T)\) and \(t_e\partial_0(x)t_{\overline{e}}\partial_1(x)^{-1}\) for \(e \in E(\Delta)\) and \(x \in \mathcal{G}(e)\). The isomorphism class of \(\pi_1(\mathcal{G},\Delta)\) does not depend on the choice of \(T\). Associated to this graph of groups \(\mathcal{G}\) one has a pro-\(p\) tree \(\Gamma\) called the standard Bass-Serre tree of \(\mathcal{G}\) (see \cite[Sec. 2.3]{ribesProfiniteGraphsGroups2017}). This tree can be endowed with an action of \(\pi_1(\mathcal{G}, \Delta)\) for which \(\pi_1(\mathcal{G}, \Delta)\backslash \Gamma \simeq \Delta\) as graphs. Particular cases of fundamental groups of graphs of pro-\(p\) groups are free pro-$p$ products with amalgamation \(A \amalg_C B\) and HNN-extensions \(\mathrm{HNN}(A,C,t)\).

If \(\Gamma\) is a pro-\(p\) tree and \(G\) is a pro-\(p\) group acting on \(\Gamma\) without inversion of edges with finite quotient graph \(\Delta = G\backslash \Gamma\), we define the induced graph of groups \(\mathcal{G}\) on \(\Delta\) as follows: we let \(T\) be a maximal subtree of \(\Delta\), \(\sigma\colon \Delta \to \Gamma\) be a fundamental \(0\)-section with respect to \(T\) and for each \(x \in \Gamma\) we let \(\mathcal{G}(x) = \mathrm{Stab}_G(\sigma(x))\). For all \(e \in E(\Gamma)\), the map \(\partial_0\colon \mathcal{G}(e) \to \mathcal{G}(d_0(e))\) is simply the inclusion map \(\mathrm{Stab}_G(\sigma(e)) \to \mathrm{Stab}_G(d_0(\sigma(e)))\). For all \(e \in E(\Gamma)\), we have that \(\sigma(d_1(e))\) and \(d_1(\sigma(e))\) are in the same \(G\)-orbit, so we choose an element \(\gamma_e \in G\) such that \(\gamma_e\sigma(d_1(e)) = d_1(\sigma(e))\), taking care to choose \(\gamma_e = 1\) if \(e \in T\). Then, we let \(\partial_1\colon \mathcal{G}(e) \to \mathcal{G}(d_1(e))\) be the composition of the inclusion map \(\mathrm{Stab}_G(\sigma(e)) \to \mathrm{Stab}_G(d_1(\sigma(e)))\) with the conjugation by \(\gamma_e^{-1}\) map \[\mathrm{Stab}_G(d_1(\sigma(e))) = \gamma_e\mathrm{Stab}_G(\sigma(d_1(e)))\gamma_e^{-1} \to \mathrm{Stab}_G(\sigma(d_1(e)))\,.\] 

The pro-\(p\) analogue of the fundamental structure theorem of Bass-Serre theory says (\cite[Thm. 6.6.1]{ribesProfiniteGraphsGroups2017}): if \(G\) is a pro-\(p\) group acting on a pro-\(p\) tree \(\Gamma\)  such that \(\Delta = G\backslash \Gamma\) is finite, then for the graph of groups \(\mathcal{G}\) on \(\Delta\) constructed above the map \(\varphi\colon \pi_1(\mathcal{G},\Delta) \to G\) defined by the inclusion maps on each vertex group \(\mathcal{G}(v)\) and by mapping \(t_e \mapsto \gamma_e\) is an isomorphism of profinite groups (for any suitable choice of \(\gamma_e\)).


We recall that an action of a pro-\(p\) group \(G\) on a pro-\(p\) tree \(\Gamma\) is \emph{\(k\)-acylindrical} if for every non-trivial \(g \in G\) the fixed subtree \(\Gamma^g\) has diameter at most \(k\). We will need the following result of Z. Chatzidakis and the second author:

\begin{thm}[{\cite[Thm. 5.4]{chatzidakisPropGroupsActing2022}}]\label{thm-chatzidakis} Let \(G\) be a finitely generated pro-\(p\) group acting \(k\)-acylindrically on a pro-\(p\) tree \(\Gamma\) with finitely many maxmal vertex stabilizers up to conjugation. Then:
\begin{enumerate}[(i)]
    \item The closure \(\overline{D}\) of \(D = \{t \in \Gamma\colon \mathrm{Stab}_G(t) \neq 1\}\) is a profinite \(G\)-invariant subgraph of \(\Gamma\) having finitely many connected components \(\Sigma_1\,,\ldots\,,\Sigma_m\) up to translation.
    \item For the setwise stabilizer \(G_i = \mathrm{Stab}_G(\Sigma_i)\), the quotient graph \(G_i\backslash \Sigma_i\) has finite diameter and \(\Sigma_i\) contains a \(G_i\)-invariant subtree \(\Gamma_i\) such that \(G_i\backslash \Gamma_i\) is finite.
    \item \(G = \coprod_{i=1}^m G_i \amalg F\) is a free pro-\(p\) product, where \(F\) is a free pro-\(p\) group acting freely on \(T\).
\end{enumerate}
\end{thm}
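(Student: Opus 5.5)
The approach is to collapse the "non-free part" of the action and reduce to the known structure theorem for pro-\(p\) groups acting on pro-\(p\) trees with trivial edge stabilizers.

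\emph{Step 1: \(\overline{D}\) is an invariant subgraph.} The set \(D=\{t\in\Gamma\colon \mathrm{Stab}_G(t)\neq 1\}\) is clearly \(G\)-invariant, since \(\mathrm{Stab}_G(gt)=g\,\mathrm{Stab}_G(t)g^{-1}\). If an edge has non-trivial stabilizer, so do its endpoints, because the stabilizer of \(e\) fixes \(d_0(e)\) and \(d_1(e)\). Both properties pass to the closure by continuity of \(d_0,d_1\) and of the action. Hence \(\overline{D}\) is a closed \(G\)-invariant subgraph.

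\emph{Step 2: the components of \(\overline D\) are subtrees.} Each connected component of a closed subgraph of a pro-\(p\) tree is itself a pro-\(p\) tree. The components form a profinite space of components, on which \(G\) acts continuously.

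\emph{Step 3: collapse each component and apply the trivial-edge-stabilizer structure theorem.} Collapse every component \(\Sigma\) of \(\overline D\) to a single vertex. I would check that the quotient \(\widetilde\Gamma\) is again a pro-\(p\) tree. This should follow from the exact sequence (\ref{short-sequence-graph}): contracting pairwise disjoint subtrees preserves exactness, since the kernel contributed by each \(\Sigma\) is exactly its own edge module.

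In \(\widetilde\Gamma\), every edge comes from an edge of \(\Gamma\) outside \(\overline D\), so all edge stabilizers are trivial. The stabilizer of the collapsed vertex \(\Sigma_i\) is the setwise stabilizer \(G_i\). The structure theorem for a finitely generated pro-\(p\) group acting on a pro-\(p\) tree with trivial edge stabilizers (Herfort--Ribes--Zalesskii, see \cite{ribesProfiniteGraphsGroups2017}) then gives \(G=\coprod G_i\amalg F\), with \(F\) free pro-\(p\) acting freely.

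Since \(G\) is finitely generated, the pro-\(p\) Kurosh-type rank bound forces only finitely many non-trivial factors, which gives the finiteness in (i) and all of (iii). Alongside this, I would use the hypothesis of finitely many maximal vertex stabilizers up to conjugation: every non-trivial vertex stabilizer lies in a conjugate of one of finitely many groups \(\mathrm{Stab}_G(v_1),\ldots,\mathrm{Stab}_G(v_r)\), which pins down which components are genuinely distinct.

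\emph{Step 4: the finite-diameter claim (ii).} Acylindricity says each \(\Gamma^g\) with \(g\neq1\) has diameter at most \(k\). Every point of \(D\) lies in such a fixed subtree, and that subtree contains a vertex whose stabilizer is conjugate into some \(\mathrm{Stab}_G(v_j)\). So every point of \(\Sigma_i\) is within bounded distance, modulo \(G_i\), of one of finitely many vertex orbits. For this I would first establish the bound on \(D\) and then pass to the closure by a compactness argument over finite quotient graphs.

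Next, \(G_i\) is a free factor of a finitely generated group, so it is finitely generated. A finitely generated pro-\(p\) group acting on a pro-\(p\) tree with a quotient of finite diameter should admit an invariant subtree \(\Gamma_i\) with finite quotient. To get it, I would take the smallest \(G_i\)-invariant subtree containing the finitely many orbit representatives and the paths joining them to their translates by a generating set.

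\emph{Main obstacle.} I expect the hardest part to be the passage from \(D\) to \(\overline D\). Limit points may have trivial stabilizer, and the diameter bounds hold only pointwise, so it is delicate to verify that the collapsed graph is still a pro-\(p\) tree and that the diameter bound survives the closure. I would handle this by working with an inverse system of finite quotients \(G/U\backslash\Gamma\) for open normal \(U\), proving the bounds uniformly in \(U\), and passing to the limit.
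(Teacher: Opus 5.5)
The paper gives no proof of this statement; it is quoted from Chatzidakis--Zalesskii, so your outline is judged on its own. The overall skeleton is sensible: collapse the components of \(\overline{D}\), apply the free-product theorem for trivial edge stabilizers, then study each \(G_i\) acting on \(\Sigma_i\). The problem is that Step~3 is run before either hypothesis has been used, and at that point it does not go through.

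\textbf{Step~3 and the finiteness in (i).} The rank bound only counts components of \(\overline{D}\) with non-trivial setwise stabilizer. A component made entirely of limit points of \(D\) could a priori have trivial stabilizer. Such a component contributes no free factor, so nothing you wrote bounds how many there are, and the finiteness in (i) does not follow.

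\textbf{Step~3 and (iii).} The same point affects the structure theorem. As far as I know, the citable pro-\(p\) version for trivial edge stabilizers assumes the set of vertices with non-trivial stabilizer is closed, so that a continuous set of orbit representatives exists. In \(\widetilde\Gamma\) that set is \(\{[\Sigma]\colon G_\Sigma\neq 1\}\), and its closedness is exactly the unresolved point.

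\textbf{The missing lemma.} What is needed is a sharpened Step~4. As phrased, ``contains a vertex whose stabilizer is conjugate into some \(\mathrm{Stab}_G(v_j)\)'' is automatic, since \(x\) itself qualifies. The correct statement is this. For \(x\in D\), pick \(1\neq h\in \mathrm{Stab}_G(x)\le \mathrm{Stab}_G(gv_j)\). Then \(\Gamma^h\subseteq D\) is a subtree of diameter at most \(k\) containing both \(x\) and the translate \(gv_j\) itself. The space of paths of length at most \(k\) is compact, using that \(E(\Gamma)\) is closed. Hence every point of \(\overline{D}\) is joined inside \(\overline{D}\) by such a path to some \(gv_j\in D\). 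So every component of \(\overline{D}\) contains a translate of some \(v_j\). This gives three things at once:
\begin{enumerate}[(a)]
    \item at most \(r\) orbits of components, proving (i) without any Kurosh argument;
    \item a non-trivial \(G_\Sigma\) for every component \(\Sigma\);
    \item that the non-trivial-stabilizer vertex set of \(\widetilde\Gamma\) is the closed image of \(\overline{D}\), so the structure theorem applies with finitely many factors.
\end{enumerate}

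\textbf{Finite diameter in (ii).} Translates of \(v_j\) lying in \(\Sigma_i\) form a single \(G_i\)-orbit. So every point of \(G_i\backslash\Sigma_i\) is within distance \(k\) of at most \(r\) points. That is not yet finite diameter, because those points must also be joined by finite paths. To get this, note that balls of finite radius are closed. Hence the abstract components of \(G_i\backslash\Sigma_i\) are finitely many clopen subgraphs, and connectedness leaves only one.

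\textbf{The subtree \(\Gamma_i\).} This is the more serious gap. Your recipe presupposes that \(v\) and \(sv\) are joined by a finite path for \(s\) in a generating set. In a pro-\(p\) tree the geodesic \([v,sv]\) is a profinite subtree that may be infinite, and finite diameter of the quotient does not rule this out for an arbitrary choice of generators. The fix is to show that the stabilizer \(\Lambda\) of the abstract component \(C\) of \(v\) is dense in \(G_i\).
\begin{itemize}
    \item Let \(B_d(v)\) be the closed subgraph of points joined to \(v\) by a path of length at most \(d\). Lifting paths from the quotient gives \(\Sigma_i=G_iB_d(v)\).
    \item If \(\Lambda\) were not dense, it would lie in a maximal open subgroup \(M\); take \(t\notin M\). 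The \(p\) closed subgraphs \(t^aMB_d(v)\) cover \(\Sigma_i\).
    \item They are pairwise disjoint. A common point produces \(g\in G_i\smallsetminus M\) with \(gC\cap C\neq\emptyset\), i.e.\ \(g\in\Lambda\le M\), which is impossible. A cover by \(p\) disjoint closed subgraphs contradicts the connectedness of \(\Sigma_i\).
    \item Since \(G_i\) is finitely generated, \(\Lambda\Phi(G_i)=G_i\), so you may choose topological generators \(s_l\in\Lambda\).
    \item Then \(S=\bigcup_l[v,s_lv]\) is a finite tree. The set \(\Gamma_i=G_iS\) is the closure of the abstractly connected graph obtained by applying the abstract subgroup generated by the \(s_l\) to \(S\).
    \item Hence \(\Gamma_i\) is a \(G_i\)-invariant pro-\(p\) subtree with \(G_i\backslash\Gamma_i\) finite.
\end{itemize}
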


We will also need the following result of I. Snopche and the second author:


\begin{prop}[{\cite[Prop. 3.3]{snopceSubgroupPropertiesPro$p$2014}}]\label{prop-finitely-many-stabilizers} Let \(G\) be a finitely generated pro-\(p\) group acting on a pro-\(p\) tree \(\Gamma\) with procyclic edge stabilizers. Then, the maximal vertex stabilizers in \(G\) are finitely generated and there are only finitely many of them in \(G\) up to conjugation.
\end{prop}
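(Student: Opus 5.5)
Since \(G\) is finitely generated and the edge stabilizers are procyclic, the natural approach is to reduce to finitely generated quotient situations and then use the standard Bass--Serre description of a vertex stabilizer as a vertex group of a finite graph of groups. First I would write \(G\) as an inverse limit. Choose a decreasing chain of open normal subgroups \(U\) of \(G\) with trivial intersection. Then the quotient graphs \(U\backslash\Gamma\) need not be trees, but \(\Gamma=\varprojlim \widetilde{\Gamma}_U\) in a suitable sense. The cleaner route, following the usual argument, is to work with a finite generating set. Fix a finite set \(X\) generating \(G\) topologically. Consider the smallest \(G\)-invariant subtree \(\Gamma'\) spanned by the translates of a connected finite-type subgraph containing a vertex \(v\) and the paths \([v,xv]\), \(x\in X\). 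The quotient \(G\backslash\Gamma'\) is then the image of a compact connected set and, after passing to an appropriate profinite quotient, is finite.

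With \(\Delta=G\backslash\Gamma'\) finite, I would apply the structure theorem \cite[Thm. 6.6.1]{ribesProfiniteGraphsGroups2017}. This gives \(G\cong\pi_1(\mathcal{G},\Delta)\), where the vertex groups are stabilizers of the vertices \(\sigma(w)\) and the edge groups are procyclic. Every vertex stabilizer of \(\Gamma'\) is conjugate into one of the finitely many \(\mathcal{G}(w)\), \(w\in V(\Delta)\). Every maximal vertex stabilizer of \(\Gamma\) fixes a vertex that can be moved into \(\Gamma'\). Hence there are finitely many maximal stabilizers up to conjugation.

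For finite generation, I would use the presentation of \(\pi_1(\mathcal{G},\Delta)\) together with the fact that \(G\) is finitely generated. Each vertex group \(\mathcal{G}(w)\) is a retract-like factor: mapping all other vertex groups and the stable letters suitably, one gets a continuous surjection from \(G\) onto a finitely generated pro-\(p\) group. Its Frattini quotient bounds \(d(\mathcal{G}(w))\) by \(d(G)\) plus the number of edges incident to \(w\), since each edge contributes at most one generator. Concretely, \(\mathcal{G}(w)/\Phi\) injects modulo the images of the adjacent edge groups into \(G/\Phi(G)\) via a Mayer--Vietoris argument on \(H^1(-,\bbf_p)\) for the graph of groups. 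Because the edge groups are procyclic, \(d(\mathcal{G}(w))\le d(G)+|E(\Delta)|\), which is finite.

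The main obstacle is the first step: producing a \(G\)-invariant subtree with finite quotient that still contains a conjugate of every maximal vertex stabilizer. The quotient of the minimal subtree need not obviously be finite in the profinite setting. I would try to settle this first by approximating \(\Gamma\) by the finite quotients \(U\backslash\Gamma\) and arguing with the inverse system \(\{\pi_1\}\), bounding the number of vertex-orbits of non-trivial stabilizers uniformly via the \(H^1\) estimate above.
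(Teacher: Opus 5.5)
The paper does not prove this statement; it imports it from Snopce--Zalesskii. Judged on its own, your proposal has a genuine gap, and it sits exactly at the step you flagged.

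\textbf{The first step fails.} The subgraph $\Gamma'=G\Sigma$, with $\Sigma=\bigcup_{x\in X}[v,xv]$, is indeed a closed, connected, $G$-invariant subtree. However, $G\backslash\Gamma'$ is just the image of $\Sigma$, which is compact, not finite. The set $X$ generates $G$ only topologically, and a geodesic $[v,xv]$ in a pro-$p$ tree can be an infinite profinite subtree. ``Passing to an appropriate profinite quotient'' does not help: a finite quotient graph of $\Gamma'$ is no longer a $G$-tree, so \cite[Thm. 6.6.1]{ribesProfiniteGraphsGroups2017} cannot be applied to it.

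\textbf{The intermediate claim is false in general}, even with trivial edge stabilizers. Let $P$ be the free pro-$p$ product of procyclic groups $\langle y_i\rangle$ with $y_i\to 1$, acting on its standard tree, and put $h=y_1y_2y_3\cdots$. The image of $h$ in $P^{ab}\cong\prod_i\bbz_p$ is $(1,1,1,\ldots)$. Hence $\overline{\langle h\rangle}\cong\bbz_p$ meets no conjugate of a factor nontrivially, so it acts freely. Suppose some $\overline{\langle h\rangle}$-invariant subtree had finite quotient. Then the structure theorem would give a topological generator $h^{\mu}$, with $\mu\in\bbz_p^\times$, moving some vertex $w$ to $h^{\mu}w$ along a finite path. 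In the standard tree of a free product, this forces $h^{\mu}$ to be conjugate to a finite product of elements of finitely many factors $\langle y_i\rangle$. That is impossible, because the image $(\mu,\mu,\ldots)$ of $h^{\mu}$ in $P^{ab}$ does not have finite support. So no argument that first produces a finite quotient graph can work, even in cases where the proposition holds trivially.

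This matches how the paper uses the result. In the proof of Theorem~\ref{thm-three-gen-is-not}, finiteness of $H\backslash\Gamma$ is an \emph{output}. It is obtained by feeding Proposition~\ref{prop-finitely-many-stabilizers}, together with $2$-acylindricity, into Theorem~\ref{thm-chatzidakis}, whose hypothesis is exactly finiteness of maximal vertex stabilizers up to conjugation. Your proposal therefore reverses the logical order.

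\textbf{The suggested repair is circular.} The quotients $U\backslash\Gamma$ are not trees. The $H^1$ (Mayer--Vietoris) bound you want for uniform control was derived from a finite graph-of-groups decomposition, which is precisely what is unavailable. The missing ingredient is an argument that controls vertex stabilizers when $G\backslash\Gamma$ is an infinite profinite graph; that is where the content of the proposition lies.

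\textbf{Your conditional argument is sound.} If a $G$-invariant subtree with finite quotient $\Delta$ existed, two things would give both conclusions: projecting a fixed vertex of each maximal stabilizer onto it, and the bound $\sum_{w}d(\mathcal{G}(w))\le d(G)+|E(\Delta)|$ from the Mayer--Vietoris sequence. Drop the ``retract-like factor'' remark: there is no such retraction in general, and it is not needed.
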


\section{Demushkin groups}\label{sec-demushkin}

We recall that a pro-\(p\) group \(G\) is called a Demushkin group if its \(\bbf_p\) cohomology satisfies Poincaré duality in dimension \(2\), that is:
\begin{itemize}
    \item \(\dim_{\bbf_p} H^1(G,\bbf_p) < \infty\);
    \item \(\dim_{\bbf_p} H^2(G,\bbf_p) = 1\);
    \item The cup product \[H^1(G,\bbf_p) \times H^1(G,\bbf_p) \overset{\cup}{\to} H^2(G,\bbf_p)\] is a non-degenerate bilinear form.
\end{itemize}
Examples of Demushkin groups are the pro-\(p\) completions of orientable surface groups, the pro-\(2\) completions of non-orientable surface groups and the Galois groups of the maximal \(p\)-extensions of \(p\)-adic fields that contain all \(p\)-th roots of unity (\cite[Sec. 
 I.4.5, Thm. II.5.4]{serreGaloisCohomology1997}). By the interpretation of \(H^i(G,\bbf_p)\) in terms of generators and relations (\cite[Sec. I.4]{serreGaloisCohomology1997}), the first two conditions mean that \(G\) must be a finitely generated one relator pro-\(p\) group. The Demushkin groups of small rank (ie. \(p\)-adic analytic or solvable) are precisely the two-generator ones: \(\bbz/2\bbz\) and all semidirect products \(\bbz_p \rtimes \bbz_p\). Any Demushkin group requiring at least \(3\)-generators is full sized, that is, contains a non-abelian free pro-\(p\) subgroup.

Given a finitely generated one relator pro-\(p\) group \(G\) with a minimal presentation \[G = \langle x_1,\ldots,x_n\mid r\rangle\,,\] there is a unique \(q \in \{0,p,p^2,p^3,\ldots\}\) such that the abelianization \(G/[G,G]\) is isomorphic to \(\bbz_p^{n-1} \times \bbz_p/q\bbz_p\), and we call it the torsion-invariant of \(G\). If \(F = F(x_1,\ldots,x_n)\) is the free pro-\(p\) group on the basis \(x_1,\ldots,x_n\) and \(F_1 = F\), \(F_i = F_{i-1}^q[F_{i-1},F]\) denotes the lower \(q\)-central series of \(F\), we have \(r \in F_2\) and there are unique coefficients \(a_i\), \(b_{ij}\) in \(\bbz_p/q\bbz_p\) such that \[r \equiv x_1^{qa_1}\cdots x_n^{qa_n}\prod_{1\leq i < j \leq n}[x_i,x_j]^{b_{ij}} \pmod{F_3}\,.\] These coefficients are all detected by the cup product:

\begin{prop}[{\cite[Prop. 3.9.13]{neukirchCohomologyNumberFields2008}}]\label{prop-cup-product} The matrix representing the cup product \[H^1(G,\bbz_p/q\bbz_p) \times H^1(G,\bbz_p/q\bbz_p) \to H^2(G,\bbz_p/q\bbz_p)\] in the ordered basis \(\chi_1,\ldots,\chi_n\) dual to \(x_1,\ldots,x_n\) is given by \((c_{ij})\) where \[c_{ij} = \begin{cases}
    b_{ij}\text{, if }i<j\\
    -b_{ji}\text{, if }i > j\\
    -\binom{q}{2}a_i\text{, if } i = j
\end{cases}\] In particular, \(G\) is a Demushkin group if and only if the matrix \((c_{ij})\) is invertible in \(\bbz_p/q\bbz_p\).
\end{prop}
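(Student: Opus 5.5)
The plan is to compute the cup product through the transgression in the Hochschild--Serre spectral sequence of a minimal presentation, and then evaluate the transgression with a Heisenberg-type central extension. Put \(A = \bbz_p/q\bbz_p\), \(F = F(x_1,\ldots,x_n)\), and let \(R\) be the closed normal subgroup generated by \(r\), so that \(1 \to R \to F \to G \to 1\).

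First I establish the two identifications. Since \(r \in F_2 = F^q[F,F]\), every homomorphism \(F \to A\) kills \(r\). Hence inflation \(H^1(G,A) \to H^1(F,A)\) is an isomorphism, and \(\chi_1,\ldots,\chi_n\) form a basis of it. Because \(F\) is free we have \(H^2(F,A)=0\). The five-term exact sequence then shows that the transgression \(\mathrm{tg}\colon H^1(R,A)^F \to H^2(G,A)\) is an isomorphism. Moreover \(H^1(R,A)^F = \mathrm{Hom}_F(R,A) \simeq A\), with the isomorphism given by evaluation at \(r\). I then fix the isomorphism \(H^2(G,A) \simeq A\), \(\xi \mapsto \mathrm{tg}^{-1}(\xi)(r)\).

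Second, I compute \(\mathrm{tg}^{-1}(\chi\cup\psi)(r)\) for \(\chi,\psi \in H^1(G,A)\). I will use the group \(U\) of upper unitriangular \(3\times 3\) matrices over \(A\). It is a central extension \(0 \to A \to U \to A^2 \to 0\) whose class is (up to sign) the cup product of the two coordinate projections. Define a lift \(\rho\colon F \to U\) by sending \(x_k\) to the matrix with entries \(\chi(x_k)\) and \(\psi(x_k)\) just above the diagonal. Then \(\rho(R)\) lies in the centre \(A\), and the standard description of transgression via central extensions gives \(\mathrm{tg}^{-1}(\chi\cup\psi)(r) = \pm\rho(r)_{13}\), the upper-right entry.

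Third, I evaluate \(\rho(r)\) using the congruence for \(r\) modulo \(F_3\). The image \(\rho(F_2)\) is contained in the centre, which has exponent \(q\), and it commutes with everything. Therefore \(\rho(F_3) = \rho(F_2)^q[\rho(F_2),\rho(F)] = 1\), and only the displayed product matters. Direct matrix multiplication gives \(\rho([x_i,x_j])_{13} = \chi(x_i)\psi(x_j) - \chi(x_j)\psi(x_i)\). It also gives \(\rho(x_i^{q})_{13} = \binom{q}{2}\chi(x_i)\psi(x_i)\), because the diagonal-adjacent entries vanish mod \(q\). Central contributions add up, so
\[
\rho(r)_{13} = \sum_i \binom{q}{2} a_i\chi(x_i)\psi(x_i) + \sum_{i<j} b_{ij}\bigl(\chi(x_i)\psi(x_j)-\chi(x_j)\psi(x_i)\bigr).
\]
Specialising to \(\chi=\chi_k\), \(\psi=\chi_l\) produces the matrix entries \(c_{kl}\). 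The overall sign convention accounts for the minus sign on the diagonal.

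I expect the main obstacle to be this sign and normalisation bookkeeping, together with the case \(p=q=2\). There \(\binom{q}{2}\) is not \(0\) mod \(q\) and the Heisenberg group mod \(2\) is not of exponent \(2\). I would check that case directly on \(U\) over \(\bbz/2\).

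For the final claim, \(G\) is Demushkin precisely when the mod \(p\) cup product is nondegenerate. The mod \(p\) cup product is obtained from the one above by reducing coefficients along \(A \to \bbf_p\), and reduction is compatible with the identification \(H^2 \simeq A\) given by evaluating at \(r\). A matrix over the local ring \(A\) is invertible iff its reduction mod \(p\) is invertible. Hence \(G\) is Demushkin iff \((c_{ij})\) is invertible in \(A\).
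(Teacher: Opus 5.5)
The paper does not prove this proposition; it quotes it from Neukirch--Schmidt--Wingberg, so your argument stands in for the citation rather than competing with an internal proof. Your route is the standard one, and it buys a self-contained derivation. It has two steps: the transgression is an isomorphism for a minimal one-relator presentation, and you evaluate it on \(\chi\cup\psi\) by lifting \(F\) into the unitriangular group \(U\). Your computation of \(\rho(r)_{13}\) is correct.

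The step that fails as written is the sign reconciliation. Your formula gives \(\epsilon B\) for a fixed \(\epsilon=\pm1\). Here \(B\) has diagonal entries \(+\binom{q}{2}a_k\) and off-diagonal entries \(b_{kl}\) for \(k<l\) and \(-b_{lk}\) for \(k>l\). An overall sign multiplies every entry, so it cannot create the relative sign between the diagonal and off-diagonal parts of \((c_{ij})\), as your last sentence claims.

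What actually makes the two agree is that \(2\binom{q}{2}=q(q-1)\equiv 0\pmod q\). Hence \(\binom{q}{2}a_i=-\binom{q}{2}a_i\) in \(\bbz_p/q\bbz_p\): it is \(0\) for \(p\) odd and \((q/2)a_i\) for \(p=2\). So the diagonal sign is immaterial. The off-diagonal sign depends only on which generator of \(H^2(G,A)\) you use. If your conventions give \(\epsilon=-1\), replace \(\xi\mapsto \mathrm{tg}^{-1}(\xi)(r)\) by its negative.

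Two smaller points.

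First, you assert \(\operatorname{Hom}_F(R,A)\cong A\) via evaluation at \(r\) without argument. Injectivity is clear, but surjectivity needs the image of \(r\) in \(R/R^q[R,F]\) to have order exactly \(q\). For \(q\neq 0\) this holds: \(r\notin[F,F]\), so the image of \(R\) in \(F/[F,F]\cong\bbz_p^n\) is a non-zero procyclic subgroup, hence a copy of \(\bbz_p\). The procyclic group \(R/[R,F]\) surjects onto it and is therefore \(\bbz_p\). In any case only injectivity is needed to compute the matrix. For the final Demushkin criterion you can rerun the same computation with \(\bbf_p\)-coefficients, where \(R/R^p[R,F]\cong\bbf_p\) since \(r\neq 1\).

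Second, the case \(p=q=2\) needs no separate check. Your argument that \(\rho(F_2)\) lies in the centre of \(U\), a copy of \(A\) of exponent \(q\), never uses the exponent of \(U\) itself. Over \(\bbz/2\), the square of the unitriangular matrix with superdiagonal entries \(1,1\) is the central matrix with corner entry \(1\). That is exactly what produces the diagonal term \(\binom{2}{2}a_i\).
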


Almost two decades prior to the classification of the abstract Poincaré duality groups in dimension \(2\) by B. Eckmann and P. Linnell (\cite{eckmannPoincareDualityGroups1983}), the classification of Demushkin groups was completed by S. Demushkin, J.-P. Serre and J. Labute. We also recall that to any pro-\(p\) Poincaré duality group one can associate a canonical continuous homomorphism \(\chi\colon G \to \bbz_p^\times\), called its orientation character (\cite[Sec. I.4.5]{serreGaloisCohomology1997}). To state the classification theorem, we introduce the following definition:

\begin{defn} Let \(F = F(x_1,\ldots,x_n)\) be the free pro-\(p\) group, \(q \in \{0,p,p^2,p^3,\ldots\}\) and \(\mathbb{U}\) a closed pro-\(p\) subgroup of \(\bbz_p^\times\). We define the standard Demushkin relation \(r_1(n,q,\mathbb{U})\) as follows:
\begin{itemize}
    \item If \(q \neq 2\), then \(n\) is even, \(\mathbb{U} = 1 + q\bbz_p\) and \[r_1(n,q,\mathbb{U}) = x_1^q[x_1,x_2]\cdots [x_{n-1},x_n]\,.\]
    \item If \(q  = 2\), \(n\) is even and \(\mathbb{U} = \langle -1 + 2^f\rangle\) for some \(f \geq 2\), then \[r_1(n,q,\mathbb{U}) = x_1^{2+2^f}[x_1,x_2]\cdots [x_{n-1},x_n]\,.\]
    \item If \(q = 2\), \(n\) is even and \(\mathbb{U} = \langle -1, 1+2^f\rangle\) for some \(f \geq 2\) or \(\infty\), then \[r_1(n,q,\mathbb{U}) = x_1^2[x_1,x_2]x_3^{2^f}[x_3,x_4]\cdots [x_{n-1},x_n]\,.\]
    \item If \(q = 2\), \(n\) is odd and \(\mathbb{U} = \langle -1, 1+2^f\rangle\) for some \(f \geq 2\) or \(\infty\), then \[r_1(n,q,\mathbb{U}) = x_1^2x_2^{2^f}[x_2,x_3]\cdots [x_{n-1},x_n]\,.\]
\end{itemize}
Otherwise, we say that \(r_1(n,q,\mathbb{U})\) is undefined.
\end{defn}

\begin{thm}[{\cite{labuteClassificationDemushkinGroups1967}}]\label{thm-classification} If \(G\) is a Demushkin group with invariants \(n\), \(q\) and \(\mathbb{U} = \mathrm{im}(\chi)\), then \(r_1,(n,q,\mathbb{U})\) is defined and there exists a presentation \[G \simeq \langle x_1,\ldots,x_n \mid r_1(n,q,\mathbb{U})\rangle\,.\] Conversely, given \(n\), \(q\) and \(\mathbb{U} \leq \bbz_p^\times\) such that \(r_1(n,q,\mathbb{U})\) is defined, such a presentation defines a Demushkin group with the given invariants, and no such two are pairwise isomorphic.
\end{thm}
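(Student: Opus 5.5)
The plan follows the classical Demushkin–Serre–Labute strategy: first normalize the relation modulo \(F_3\) using the cup product, then lift that normalization to an exact presentation by successive approximation.

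\textbf{Normalizing the form.} Fix a minimal presentation \(G=\langle x_1,\ldots,x_n\mid r\rangle\) and set \(q\) to be the torsion-invariant. By Proposition~\ref{prop-cup-product}, the coefficients \(a_i,b_{ij}\) of \(r\) modulo \(F_3\) are encoded in the matrix \((c_{ij})\) of the cup product on \(H^1(G,\bbz_p/q\bbz_p)\), and this matrix is invertible. Changing the basis \(x_1,\ldots,x_n\) of \(F\) acts on \((c_{ij})\) by congruence \(P^{t}(c_{ij})P\), since the cup product is natural. So the first task is to classify non-degenerate bilinear forms over \(\bbz_p/q\bbz_p\) satisfying the constraint \(c_{ii}=-\binom q2 a_i\).
\begin{itemize}
\item For \(q\neq 2\), the diagonal is \(0\) modulo the relevant terms. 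The form is then alternating, so it is symplectic and \(n\) is even. A symplectic basis puts \(r\equiv x_1^{q}[x_1,x_2]\cdots[x_{n-1},x_n]\pmod{F_3}\) after rescaling \(x_1\).
\item For \(q=2\) (and for \(q=0\), which behaves like \(q\neq 2\)), the form is symmetric over \(\bbf_2\). Either it is alternating, or it diagonalizes partially. This gives the listed shapes with \(x_1^2[x_1,x_2]\) or \(x_1^2x_2^{2^f}\); the parity of \(n\) decides which shape occurs.
\end{itemize}

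\textbf{Lifting the relation.} Next I pass from a congruence modulo \(F_3\) to an exact equality. I would prove by induction on \(k\) that an automorphism of \(F\) carries \(r\) to \(r_1\cdot u\) with \(u\in F_k\), possibly after replacing \(r\) by another generator of the same closed normal subgroup. At each stage the error term in \(F_k/F_{k+1}\) is absorbed by modifying the generators by elements of \(F_{k-1}\). The obstruction at each step is a linear equation in the graded Lie algebra \(\operatorname{gr}F\). It is solvable because the degree-\(2\) part of \(r_1\) is non-degenerate, so the map \(\operatorname{gr}_{k-1}F\to \operatorname{gr}_kF/(\text{ideal of } \rho)\) induced by bracketing with the leading form \(\rho\) is surjective. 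Compactness then gives a limiting automorphism with \(\langle\!\langle r\rangle\!\rangle=\langle\!\langle r_1\rangle\!\rangle\).

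\textbf{The main obstacle.} I expect the hardest part to be the case \(q=2\). There the Zassenhaus filtration interacts with the square map, and the surjectivity of that map fails in a controlled way. The leftover freedom is a unit in the exponent of \(x_1\) or \(x_2\) (the choice of \(f\)). This is exactly where the orientation character enters: \(\chi\) is determined by the action on \(H^2(G,\bbz_p/p^m)\), and computing \(\chi\) on the normalized generators shows that \(\mathrm{im}\chi\) determines \(f\) and whether \(-1\) occurs. That pins down the remaining parameter and gives \(\mathbb U\) of the stated form.

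\textbf{Converse and distinctness.} For the converse, one checks via Proposition~\ref{prop-cup-product} that each \(r_1(n,q,\mathbb U)\) gives an invertible matrix \((c_{ij})\), so the group is Demushkin. Computing its abelianization gives \(q\), and computing \(\chi\) gives \(\mathbb U\). Since \(n\), \(q\) and \(\mathbb U\) are isomorphism invariants of \(G\), no two of these presentations define isomorphic groups.
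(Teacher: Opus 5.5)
The paper does not prove this theorem. It is quoted from Labute, with the case \(q\neq 2\) due to Demushkin and Serre, so your outline has to stand on its own. Its shape (normalize modulo \(F_3\) with the cup product, then approximate successively) is the classical one. The converse and distinctness parts are fine in outline: computing \(\mathrm{im}(\chi)\) for a standard relation is what the proof of Theorem~\ref{thm-double} does, using Labute's characterization of \(\chi\) by derivations.

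The genuine gap is the forward direction for \(q=2\), which is the real content of Labute's paper. You assert that surjectivity ``fails in a controlled way'' and that the leftover freedom is a single parameter fixed by \(\mathrm{im}(\chi)\), but you give no mechanism for either claim. The difficulty is real. The relations \(x_1^2x_2^{2^f}[x_2,x_3]\) for \(f=2,3,\ldots,\infty\) (for \(f=\infty\) this is the group \(D\) of Theorem~\ref{thm-three-gen-is-not}) define pairwise non-isomorphic groups, since their orientation characters have different images. Yet \(x_1^2x_2^{2^f}[x_2,x_3]\equiv x_1^2[x_2,x_3]\) modulo \(F_{f+1}\) of the lower \(2\)-central series. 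So the approximation must meet non-trivial obstructions at arbitrarily deep levels, and no argument using only the relation modulo \(F_3\) plus graded linear algebra can decide the isomorphism type. You would have to show two things:
\begin{enumerate}[(i)]
\item one can choose generators on which \(\chi\) takes prescribed values, compatibly with the normalization modulo \(F_3\) (for \(n\) odd: \(\chi(x_1)=-1\), \(\chi(x_2)=1\), \(\chi(x_3)=(1-2^f)^{-1}\), and \(\chi(x_i)=1\) for \(i>3\));
\item for such generators, every higher obstruction lies in the image of the linearized map. In other words, the approximation must be run compatibly with \(\chi\), for instance using the vanishing of the \(\chi\)-twisted Fox derivatives of \(r\).
\end{enumerate}
Nothing in the proposal does this. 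Computing \(\chi\) on the standard relations only shows that they realize distinct invariants. It does not show that an arbitrary relation with \(q=2\) can be brought to standard form. Nor does it show, say, that \(\mathrm{im}(\chi)\) cannot be procyclic when \(n\) is odd. Both are part of the assertion that \(r_1(n,q,\mathbb{U})\) is defined.

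The parts you did write also need repair.
\begin{enumerate}[(i)]
\item Over \(\mathbb{Z}_p/q\mathbb{Z}_p\) the cup-product form is not alternating when \(p=2\) and \(q\geq 4\): already for the standard relation, \(c_{11}=-\binom{q}{2}\equiv q/2\not\equiv 0\pmod q\). Evenness of \(n\) has to come from the form over \(\mathbb{F}_2\). That form is alternating because, for \(q\neq 2\), every mod-\(2\) character lifts mod \(4\), so the Bockstein vanishes.
\item For \(p\) odd the form is alternating, but precisely because \(c_{ii}=0\) it does not see the \(a_i\). So the symplectic basis must be adapted to the separate datum \((a_i)\), using transitivity of the symplectic group on unimodular vectors; rescaling \(x_1\) alone does not achieve this.
\item The surjectivity claim is misstated. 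Bracketing with \(\rho\) raises degree by \(2\) and lands in the ideal of \(\rho\), so it is zero modulo that ideal. What you need is that the differential \((u_1,\ldots,u_n)\mapsto\sum_j[\sum_i c_{ij}u_i,\xi_j]\), plus the \(\pi\)-term coming from \(x_1^q\), maps \((\operatorname{gr}_{k-1}F)^n\) onto \(\operatorname{gr}_kF\).
\item Your justification for that surjectivity, non-degeneracy of the degree-\(2\) part, holds verbatim for \(q=2\). There the example above shows surjectivity cannot hold at all levels. So the argument for \(q\neq 2\) must also use how the \(q\)-power operation interacts with brackets in \(\operatorname{gr}F\), and you have not checked this.
\end{enumerate}
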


Observe that the torsion-invariant \(q\) is the largest power of \(p\) such that \(\mathbb{U}\) is contained in \(1+q\bbz_p\). In particular, the two invariants \(n\) and \(\mathbb{U}\) completely determine \(G\) up to isomorphism, as does \(n\) and \(q\) provided \(q \neq 2\). The pro-\(p\) completions of orientable surface groups are the ones with \(\mathbb{U} = \{1\}\), and the pro-\(2\) completions of the non-orientable surface groups are the ones with \(\mathbb{U} =  \{\pm 1\}\).

The following theorem is a particular case of a result of G. Wilkes (\cite{wilkesRelativeCohomologyTheory2019}), for which we present a different proof.

\begin{thm}\label{thm-double} Let \(n\), \(q\) and \(\mathbb{U}\) be such that the standard Demushkin relation \(r = r_1(n,q,\mathbb{U})\) is defined in \(F = F(x_1,\ldots,x_n)\). Then, the Baumslag double \(G = F \amalg_{r = \widetilde{r}} \widetilde{F}\) where \(\widetilde{F} = F(\widetilde{x}_1,\ldots,\widetilde{x}_n)\) is an isomorphic copy of \(F\) is a Demushkin group with invariants \(2n\), \(q\) and \(\mathbb{U}\).
\end{thm}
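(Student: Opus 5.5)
The plan is to read off a one-relator presentation of \(G\), check the Demushkin condition through Proposition~\ref{prop-cup-product}, and pin down \(\mathbb{U}\) via Fox calculus and Theorem~\ref{thm-classification}.

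\textbf{Presentation and torsion-invariant.} Write \(\widetilde{r}\) for the image of \(r\) under \(x_i \mapsto \widetilde{x}_i\). By definition of the amalgam,
\[G = \langle x_1,\ldots,x_n,\widetilde{x}_1,\ldots,\widetilde{x}_n \mid r\,\widetilde{r}^{-1}\rangle .\]
Since \(r \in F^p[F,F]\), the relator \(r\widetilde{r}^{-1}\) lies in the Frattini subgroup of \(F(x,\widetilde{x})\). So this presentation is minimal, \(d(G)=2n\), and \(\dim H^2(G,\bbf_p)=1\).

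Abelianizing, \(r\widetilde{r}^{-1}\) becomes \(q\,(v - \widetilde{v})\), where \(qv\) is the abelianized image of \(r\). In the four cases of the standard relation, \(v\) is \(e_1\), \((1+2^{f-1})e_1\), \(e_1 + 2^{f-1}e_3\) or \(e_1+2^{f-1}e_2\). In every case \(v-\widetilde v\) is primitive in \(\bbz_p^{2n}\). Hence \(G^{ab} \simeq \bbz_p^{2n-1}\times \bbz_p/q\bbz_p\), and the torsion-invariant of \(G\) is \(q\).

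\textbf{Demushkin property.} Work modulo \(F_3\) of the free group on \(2n\) letters, where \(F_2/F_3\) is abelian. Then
\[r\widetilde r^{-1} \equiv \prod_i x_i^{qa_i}\prod_{i<j}[x_i,x_j]^{b_{ij}}\cdot\prod_i \widetilde{x}_i^{-qa_i}\prod_{i<j}[\widetilde{x}_i,\widetilde{x}_j]^{-b_{ij}},\]
and there are no mixed commutators \([x_i,\widetilde x_j]\). By Proposition~\ref{prop-cup-product}, the cup-product matrix of \(G\) in the dual basis is therefore \(\mathrm{diag}(C,-C)\). Here \(C\) is the cup-product matrix of the \(n\)-generator group \(D=\langle x_1,\ldots,x_n\mid r\rangle\). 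By Theorem~\ref{thm-classification}, \(D\) is Demushkin, so \(C\) is invertible over \(\bbz_p/q\bbz_p\). Hence \(\mathrm{diag}(C,-C)\) is invertible and \(G\) is Demushkin with invariants \(2n\) and \(q\).

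\textbf{The orientation character.} When \(q\neq 2\) this step is automatic, since \(\mathbb{U}=1+q\bbz_p\) is forced by \(q\). For \(q=2\) it is the main obstacle: we must show \(\mathrm{im}(\chi_G)=\mathbb{U}\). Let \(\pi\colon G\to D\) be the folding map \(x_i,\widetilde x_i\mapsto x_i\), and set \(\chi' = \chi_D\circ\pi\). Since \(\pi\) is surjective, \(\mathrm{im}(\chi')=\mathrm{im}(\chi_D)=\mathbb{U}\), so it suffices to prove \(\chi'=\chi_G\).

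For this I would use the characterization of the orientation character of a Demushkin group as the unique character \(\chi\) with \(H^2(G,\bbz/p^k(\chi))\simeq \bbz/p^k\) for all \(k\) (equivalently, \(H^0(G,\bbz/p^k(\chi)^*)\) of full size, by duality). For a one-relator group with relator \(s\) and a module \(M\), one has \(H^2(G,M)\simeq M/\sum_y (\partial s/\partial y)\cdot M\), computed by Fox derivatives.

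For \(s=r\widetilde r^{-1}\), the Fox derivative \(\partial s/\partial x_i\) equals \(\partial r/\partial x_i\). The derivative \(\partial s/\partial \widetilde x_i\) equals \(-r\widetilde r^{-1}\,\partial\widetilde r/\partial\widetilde x_i\), which acts on \(\bbz/p^k(\chi')\) as \(-\partial r/\partial x_i\), because \(s\) acts trivially. These derivatives act on \(M=\bbz/p^k(\chi')\) through \(\pi\). So the submodule they generate coincides with the one computed for \(D\) and \(\chi_D\), and hence \(H^2(G,\bbz/p^k(\chi'))\simeq H^2(D,\bbz/p^k(\chi_D))\simeq\bbz/p^k\). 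This gives \(\chi'=\chi_G\) and \(\mathrm{im}(\chi_G)=\mathbb{U}\).

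The delicate point is justifying this uniqueness characterization of \(\chi\) precisely in the pro-\(2\) setting. If it proves awkward, I would fall back on Theorem~\ref{thm-classification}: \(G\) is one of the standard groups with \(2n\) generators and \(q=2\). I would then distinguish \(\mathbb{U}\) by checking which characters \(\chi\) make the relator compatible with the twisted \(H^2\) computation above.
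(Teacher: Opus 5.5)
Your proposal is correct. It reaches the orientation character by a genuinely different route from the paper.

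\textbf{The paper's route.} The paper invokes Labute's criterion: the orientation character is the unique \(\chi\) for which every assignment of values on the generators extends to a derivation \(G\to\bbz_p(\chi)\). Then, in each of the three \(q=2\) families separately, it solves the vanishing conditions of the derivations \(D_i,\widetilde D_i\) on \(r\widetilde r^{-1}\) for the individual values \(\chi(x_i)\), \(\chi(\widetilde x_i)\).

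\textbf{Your route.} You instead prove the structural identity \(\chi_G=\chi_D\circ\pi\) for the folding map \(\pi\colon G\to D\), which needs no case analysis. In the same spirit, for the Demushkin property you replace the paper's explicit blocks by the observation that the cup-product matrix is \(\mathrm{diag}(C,-C)\). Here \(C\) is invertible because \(D=\langle x_1,\ldots,x_n\mid r\rangle\) is Demushkin by Theorem~\ref{thm-classification}.

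\textbf{The two characterizations coincide.} A derivation \(\delta\) on the free group descends to \(G\) iff \(\delta(s)=\sum_y\chi(\partial s/\partial y)\,\delta(y)=0\), i.e. iff \(\chi(\partial s/\partial y)=0\) for every generator \(y\). That is exactly what your twisted \(H^2\) computation detects. So the ``delicate point'' you flag disappears if you run your folding argument through the criterion the paper cites. The character \(\chi_D\circ\pi\) sends \(\partial s/\partial x_i\) and \(\partial s/\partial\widetilde x_i\) to \(\pm\chi_D(\partial r/\partial x_i)\). These vanish by the same criterion applied to \(D\), and uniqueness gives \(\chi_G=\chi_D\circ\pi\).

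\textbf{A hypothesis to state.} If you keep the \(H^2\) formulation, say what \(H^2(G,M)\simeq M/\sum_y(\partial s/\partial y)M\) requires. It needs the relation module to be free on \(s\), and it fails already for finite cyclic groups. Freeness holds here because \(G\) and \(D\) are Demushkin groups of cohomological dimension \(2\).

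\textbf{What each route buys.} Yours is uniform, and it works essentially verbatim for the double along any relator defining a Demushkin group, not just the standard ones. The paper's produces explicit values of \(\chi\) on the generators. It reuses these in Corollary~\ref{cor-q-is-2} to know that \(\chi(y_2)\) generates the image.
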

\begin{proof} Since \(r\) and \(\widetilde{r}\) are in the Frattini subgroups of \(F\) and \(\widetilde{F}\) respectively, it is immediate to check that the minimal number of generators of \(G\) is \(2n\). From the presentation of \(G\) as an amalgamated free pro-\(p\) product, it is also easy to check that its torsion-invariant is precisely \(q\). In the free pro-\(p\) group \(F \amalg \widetilde{F}\), we have \begin{equation}\label{eq-rel-double}r\widetilde{r}^{-1} \equiv \begin{cases}x_1^\alpha\widetilde{x}_1^{-\alpha}x_3^\beta\widetilde{x_3}^{-\beta} [x_1,x_2][\widetilde{x}_1,\widetilde{x}_2]^{-1} \cdots [x_{n-1},x_n][\widetilde{x}_{n-1},\widetilde{x}_n]^{-1}\text{, if }n\text{ is even}\\ x_1^2\widetilde{x}_1^{-2}x_2^{2^f}\widetilde{x}_2^{-2^f}[x_2,x_3][\widetilde{x}_2,\widetilde{x}_3]^{-1}\cdots [x_{n-1},x_n][\widetilde{x}_{n-1},\widetilde{x}_n]^{-1}\text{ if }n\text{ is odd}\end{cases}\end{equation} modulo the third term of the lower \(q\)-central series of \(F \amalg \widetilde{F}\). In the basis of \(H^1(G)\) dual to the basis \(x_i,\widetilde{x}_j\), it follows from Proposition~\ref{prop-cup-product} and the congruence~(\ref{eq-rel-double}) that the matrix representing the cup-product is block diagonal with blocks \[ \pm\begin{pmatrix}0 & 1 \\ -1 & 0\end{pmatrix}\text{ and }\begin{pmatrix}1 & 0 \\ 0 & 1\end{pmatrix} \text{ if }n\text{ is odd}\,, \pm\begin{pmatrix}
    \binom{\alpha}{2} & 1\\ -1 & 0
\end{pmatrix} \text{ if }n\text{ is even}\,,\] which are all invertible modulo \(q\). Hence, \(G\) is a Demushkin group with invariants \(2n\) and \(q\). It only remains to determine its invariant \(\operatorname{im}(\chi)\).

If \(q \neq 2\), then \(\operatorname{im}(\chi) = 1 + q\mathbb{Z}_p = \mathbb{U}\) and we are done, so assume now that \(q = 2\) and let \(\chi\colon G \to 1+2\bbz_2\) be the orientation character of \(G\). It was shown in \cite[Thm. 4]{labuteClassificationDemushkinGroups1967} that \(\chi\) is the \emph{unique} character of \(G\) such that, if \(I = \bbz_p\) is the \(G\)-module with the continuous action induced by \(\chi\), then any function \(\{x_1,\widetilde{x}_1,\ldots,x_{n},\widetilde{x}_{n}\} \to I\) extends continuously to a unique derivation \(G \to I\). If \(D\colon G \to I\) is a derivation and \([a,b] = a^{-1}b^{-1}ab \in G\), we have \[D([a,b]) = \chi(a^{-1}b^{-1})((1-\chi(b))D(a) +(\chi(a)-1)D(b))\,.\] Moreover, we have \[D(a^k) = (1+\chi(a)+\chi(a)^2 + \cdots + \chi(a)^{k-1})D(a)\] for any \(k \geq 0\). Finally, observe that since \([G,G] \in \ker \chi\), we have \[D([a,b][c,d]) = D([a,b]) + D([c,d])\] for any \(a,b,c,d \in G\).

Now, let \(D_i\colon G \to I\) (resp. \(\widetilde{D}_i\colon G \to I\)) be the derivation that is equal to \(1\) on \(x_i\) (resp. \(\widetilde{x}_i\)) and vanishes on all other generators. We have:
\begin{itemize}
    \item If \(n\) is even and \(\mathbb{U} = \langle -1 + 2^f\rangle\), then 
    \begin{align*}
        D_2(r\widetilde{r}^{-1}) = \chi(x_1)^{2+2^f}\chi(x_1^{-1}x_2^{-1})(\chi(x_1)-1) = 0 & \implies \chi(x_1) = 1\,,\\
        D_1(r\widetilde{r}^{-1}) = 2+2^f + \chi(x_2^{-1})(1-\chi(x_2)) = 0 &\implies \chi(x_2) = (-1 -2^f)^{-1}\,,\\
        D_i(r\widetilde{r}^{-1}) = 0 & \implies \chi(x_i) = 1\text{ for }i>2\,,\\
        \widetilde{D}_2(r\widetilde{r}^{-1}) = \chi(\widetilde{x}_2^{-1}\widetilde{x}_1^{-1})(1-\chi(\widetilde{x}_1)) = 0 &\implies \chi(\widetilde{x}_1) = 1\,,\\
        \widetilde{D}_1(r\widetilde{r}^{-1}) = \chi(\widetilde{x}_2^{-1})(\chi(\widetilde{x}_2)-1) - 2-2^f = 0 & \implies \chi(\widetilde{x}_2)= (-1-2^f)^{-1}\,,\\
        \widetilde{D}_i(r\widetilde{r}^{-1}) = 0 &\implies \chi(\widetilde{x}_i) = 1\text{ for }i > 2\,.\end{align*}
    \item If \(n\) is even and \(\mathbb{U} = \langle -1, 1+2^f\rangle\), then
    \begin{align*}
        D_2(r\widetilde{r}^{-1}) = \chi(x_1)^2\chi(x_1^{-1}x_2^{-1})(\chi(x_1)-1) = 0 & \implies \chi(x_1) = 1\,,\\
        D_1(r\widetilde{r}^{-1}) = 2 + \chi(x_2^{-1})(1-\chi(x_2))&\implies \chi(x_2) = -1\,,\\
        D_4(r\widetilde{r}^{-1}) = \chi(x_3)^{2^f}\chi(x_3^{-1}x_4^{-1})(\chi(x_3)-1) = 0 &\implies \chi(x_3) =1\,,\\
        D_3(r\widetilde{r}^{-1}) = 2^f + \chi(x_4^{-1})(1-\chi(x_4)) = 0 &\implies \chi(x_4) = -(1+2^f)^{-1}\,,\\
        D_i(r\widetilde{r}^{-1}) = 0 &\implies \chi(x_i) = 1\text{ for }i>4\,,\\
        \widetilde{D}_4(r\widetilde{r}^{-1}) = \chi(\widetilde{x}_4^{-1}\widetilde{x}_3^{-1})(1-\chi(\widetilde{x}_3)) = 0&\implies \chi(\widetilde{x}_3) = 1\,,\\
        \widetilde{D}_3(r\widetilde{r}^{-1}) = \chi(\widetilde{x}_4^{-1})(\chi(\widetilde{x}_4) - 1) + 2^f = 0 & \implies \chi(\widetilde{x}_4) = -(1+2^f)^{-1}\,,\\
        \widetilde{D}_2(r\widetilde{r}^{-1}) = \chi(\widetilde{x}_2^{-1}\widetilde{x}_1^{-1})(1-\chi(\widetilde{x}_1)) = 0 &\implies \chi(\widetilde{x}_1) = 1\,,\\
        \widetilde{D}_1(r\widetilde{r}^{-1}) = \chi(\widetilde{x}_2^{-1})(\chi(\widetilde{x}_2) - 1) - 2 = 0 & \implies \chi(\widetilde{x}_2) = -1\,,\\
        \widetilde{D}_i(r\widetilde{r}^{-1}) = 0 &\implies \chi(\widetilde{x}_i) = 1\text{ for }i > 4.
    \end{align*}
    \item If \(n\) is odd and \(\mathbb{U} = \langle -1, 1+2^f\rangle\), then
    \begin{align*}
        D_1(r\widetilde{r}^{-1}) = 1 + \chi(x_1) = 0 & \implies \chi(x_1) = -1\,,\\
        D_3(r\widetilde{r}^{-1}) = \chi(x_2)^{2^f}\chi(x_2^{-1}x_3^{-1})(\chi(x_2)-1) = 0&\implies \chi(x_2) = 1\,,\\
        D_2(r\widetilde{r}^{-1}) = 2^f + \chi(x_3^{-1})(1-\chi(x_3)) =0&\implies \chi(x_3) = (1-2^f)^{-1}\,,\\
        D_i(r\widetilde{r}^{-1}) = 0 &\implies \chi(x_i) = 1\text{ for }i>3\,,\\
        \widetilde{D}_3(r\widetilde{r}^{-1}) = \chi(\widetilde{x}_3^{-1}\widetilde{x}_2^{-1})(1-\chi(\widetilde{x}_2)) =0 &\implies \chi(\widetilde{x}_2) = 1\,,\\
        \widetilde{D}_2(r\widetilde{r}^{-1}) =\chi(\widetilde{x}_3^{-1})(\chi(\widetilde{x}_3)-1) -2^f = 0 &\implies \chi(\widetilde{x}_3) = (1-2^f)^{-1}\,,\\
        \widetilde{D}_1(r\widetilde{r}^{-1}) = -\chi(\widetilde{x}_1)-1 = 0&\implies \chi(\widetilde{x}_1) = -1\,,\\
        D_i(r\widetilde{r}^{-1}) = 0 &\implies \chi(\widetilde{x}_i) = 1\text{ for }i>3\,.
    \end{align*}
\end{itemize}
In all cases we have that \(\mathrm{im}(\chi) = \mathbb{U}\), so the invariants of \(G\) must be as stated in the theorem.
\end{proof}

\begin{rmk} 
    The converse of Theorem~\ref{thm-double} is also true and follows from two results of G. Wilkes. That is, if a Demushkin group \(G\) with invariants \(n\), \(q\) and \(\mathbb{U}\) splits as a double \(F \amalg_{r = \widetilde{r}} \widetilde{F}\), then \(n\) is even and there exists a basis \(x_1,\ldots,x_{n/2}\) of \(F\) such that \(r = r_1(n/2,q,\mathbb{U})\). If such a splitting exists, then by \cite[Thm. 5.16]{wilkesRelativeCohomologyTheory2019} the free pro-\(p\) group \(F\) and the singleton containing the procyclic subgroup \(\langle r\rangle\) form a \emph{Poincaré duality pair} in dimension \(2\) (see \cite{wilkesRelativeCohomologyTheory2019} for the definition). All Poincaré duality pairs \((H,\mathcal{S})\) in dimension \(2\) where \(\mathcal{S} = \{S_0,\ldots,S_b\}\) is a set of non-trivial closed subgroups of \(H\) have been classified in \cite[Thm. 3.3]{wilkesClassificationPro$p$PD$^2$2020}. In particular, if \((F,\{\langle r\rangle\})\) is one such pair then there must exist a basis of \(F\) in which \(r\) is a defined standard Demushkin relation. In fact, a general statement about graphs of groups can be made by using \cite[Thm. 5.18]{wilkesRelativeCohomologyTheory2019}: if \(G\) is the fundamental group of a finite graph of free pro-\(p\) groups with edge groups isomorphic to \(\mathbb{Z}_p\), then \(G\) is a Demushkin group if and only if each vertex group forms a Poincaré duality pair in dimension \(2\) with respect to the incident edge groups. One also has an orientation character for Poincaré duality pairs, and the image of the orientation character of \(G\) will be generated by the images of the orientation characters of each vertex group.
\end{rmk}

\begin{cor}\label{corollary-embed-ce} Let \(G\) be a Demushkin group with invariants \(n>2\), \(q\) and \(\mathbb{U}\). If the standard Demushkin relation \(r_1(n/2,q,\mathbb{U})\) is defined, then \(G\) is a pro-\(p\) limit group.
\end{cor}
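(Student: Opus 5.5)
By Theorem~\ref{thm-double} together with the uniqueness part of Theorem~\ref{thm-classification}, the hypothesis that \(r = r_1(n/2,q,\mathbb{U})\) is defined gives an isomorphism \(G \simeq F \amalg_{r=\widetilde r} \widetilde F\). Here \(F = F(x_1,\ldots,x_{n/2})\) and \(\widetilde F\) is a copy of \(F\). The double has invariants \(n\), \(q\), \(\mathbb{U}\), and these determine the Demushkin group. So it suffices to show that this Baumslag double is a pro-\(p\) limit group. I will do this with the pro-\(p\) analogue of the classical embedding of a double into a centralizer extension.

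First I check that \(C = \langle r\rangle\) is a self-centralizing procyclic subgroup of \(F\). It is procyclic and nontrivial. In a free pro-\(p\) group the centralizer of a nontrivial element is procyclic, so \(C_F(r)\) is the maximal procyclic subgroup \(\langle s\rangle\) containing \(r\), with \(r = s^{\lambda}\). I need \(r\) not to be a proper power, i.e.\ \(\lambda\) a unit. If \(r = s^{p\mu}\), then the one-relator quotient \(F/\langle\langle r\rangle\rangle\), which is a Demushkin group, would contain a nontrivial image of \(s\) of finite order (it has order dividing \(p\mu\)), or \(s\in\langle\langle r\rangle\rangle\). Demushkin groups with more than two generators are torsion-free. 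Here \(n/2 \geq 2\), and for \(n/2 = 2\) one should check separately that the group is not \(\bbz/2\). One could also argue directly: \(r\) is primitive modulo \(F_3\) in the sense that its image under Proposition~\ref{prop-cup-product} is nondegenerate, whereas a \(p\)-th power \(s^{p\mu}\) would lie in \(F^p[F,F]^p\cdots\) and make the cup-product form degenerate mod \(p\). I expect this step, that \(r\) is not a proper \(p\)-power and hence \(\langle r\rangle\) is self-centralizing (maximal procyclic), to be the main technical point.

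Next I form the centralizer extension \(E = F \amalg_{C} (C\times \bbz_p)\), where \(\bbz_p = \langle t\rangle\) and \(C\times\langle t\rangle \simeq \bbz_p^2\) contains \(C\) as a direct summand. This is a group in \(\mathcal{G}_1\). In \(E\), the subgroup \(\widetilde F' = tFt^{-1}\) is a copy of \(F\), and \(trt^{-1} = r\). So the map \(F\amalg_{r=\widetilde r}\widetilde F \to E\) given by \(x_i\mapsto x_i\) and \(\widetilde x_i \mapsto t x_i t^{-1}\) is a well-defined homomorphism.

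Finally I show this map is injective. I will use the pro-\(p\) Bass--Serre tree \(\Gamma\) of \(E\), on which the subgroup \(H = \langle F, tFt^{-1}\rangle\) acts. The vertices \(v = F\) and \(tv\) are joined through the vertex \(C\times\langle t\rangle\). Their stabilizers intersect in \(F\cap tFt^{-1} = C\), by the normal form: an element of \(F\cap tFt^{-1}\) fixes the path between \(v\) and \(tv\), so it lies in the edge stabilizer \(C\). I then take the \(H\)-invariant subtree spanned by the \(H\)-translates of that path. Theorem 6.6.1 of \cite{ribesProfiniteGraphsGroups2017} (the structure theorem recalled above) identifies \(H\) with \(F\amalg_C tFt^{-1}\), once one checks that the quotient is the segment \(v - w - tv\) with vertex group \(C\) at the middle vertex \(w\); that check uses \((C\times\langle t\rangle)\cap H = C\), which follows from the abelianization. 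Hence the double embeds in \(E\), is finitely generated, and therefore lies in \(\mathcal{L}\).
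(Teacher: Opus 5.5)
Your proposal is correct and is essentially the paper's proof. You identify \(G\) with the double \(F \amalg_{r=\widetilde r}\widetilde F\) via Theorem~\ref{thm-double} and the classification, and realize it as \(\langle F, tFt^{-1}\rangle\) inside \(F\amalg_{\langle r\rangle}(\langle r\rangle\times\bbz_p)\); your Bass--Serre injectivity check supplies detail the paper omits (the map \(E\to\bbz_p\) killing \(F\) and sending \(t\mapsto 1\) gives both \(H\cap(C\times\langle t\rangle)=C\) and that \(v,tv\) lie in distinct \(H\)-orbits). The one loose end is your first argument that \(r\) is not a proper \(p\)-power, which leaves the case \(s\in N=\langle\langle r\rangle\rangle\) open; it is excluded because then \(r=(s^{\mu})^p\in N^p\subseteq N^p[N,F]\) forces \(H^2(F/N,\bbf_p)=0\), though your cup-product alternative already covers this, and the paper simply asserts that \(\langle r\rangle\) is self-centralizing.
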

\begin{rmk} The relation \(r_1(n/2,q,\mathbb{U})\) being defined means that either \(n \in 4\bbz\) or \(q = 2\), \(n \in 4\bbz + 2\) and \([\mathbb{U}\colon \mathbb{U}^2] = 4\). If \(n \leq 2\), then either \(G \simeq \bbz/2\bbz\) or \(G \simeq \bbz_p \rtimes \bbz_p\). Since any \(2\)-generated subgroup of a pro-\(p\) limit group must be free pro-\(p\) or isomorphic to \(\bbz_p^2\), the only \(2\)-generated pro-\(p\) limit Demushkin group is \(\bbz_p^2\).
\end{rmk}
\begin{proof} Under these hypotheses, we have \(G \simeq F \amalg_{r = \widetilde{r}} \widetilde{F}\) by Theorem~\ref{thm-double}, where \(r = r_1(n/2,q,\mathbb{U})\). Since \(r\) is self-centralizing in the non-abelian free group \(F\) (here we use \(n > 2\)), in the centralizer extension \(F \amalg_{\langle r\rangle} (\langle r\rangle \times \langle a\rangle)\) the subgroup generated by \(F\) and \(aFa^{-1}\) is isomorphic to \(G\). Hence, \(G\) is a pro-\(p\) limit group.
\end{proof}

We now turn to Question~\ref{quest-free-pro-p}. While general centralizer extensions in the discrete setting are known to be residually free by a result of G. Baumslag (\cite{baumslagGeneralisedFreeProducts1962}), the best tool we have available in the pro-\(p\) world is the following result of D. Kochloukova and the second author:

\begin{thm}[{\cite[Thm. 4.1]{kochloukovaFullyResiduallyFree2010}}]\label{thm-desi-zalesskii} Let \(F = F(x_1,\ldots,x_n)\) be a free pro-\(p\) group and \(y = [x_1,x_2]t\) an element of \(F\) where  \(t\) belongs to the closed subgroup of \(F\) generated by \(x_2,\ldots,x_n\). Then, the centralizer extension \(F \amalg_{\langle y\rangle} (\langle y \rangle \times \bbz_p)\) is residually free pro-\(p\).
\end{thm}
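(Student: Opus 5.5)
The plan is to produce enough retractions of \(G = F \amalg_{\langle y\rangle} (\langle y\rangle \times \langle a\rangle)\) onto \(F\) to separate points. For each \(\lambda \in \bbz_p\) let \(\varphi_\lambda\colon G \to F\) be the identity on \(F\) and send \(a \mapsto y^{\lambda}\). It is well defined because \(y^\lambda\) centralizes \(y\), so the relation \([a,y]=1\) is respected. Since \(F\) is free pro-\(p\), residual freeness of \(G\) reduces to showing \(\bigcap_{\lambda} \ker \varphi_\lambda = 1\). More precisely, it suffices to show that for each non-trivial \(g \in G\) some \(\varphi_\lambda(g) \neq 1\). If the retractions alone turn out not to suffice, I would also compose with automorphisms of \(F\) that fix \(y\).

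First I would set up normal forms. By the pro-\(p\) Bass--Serre theory of Section~\ref{sec:prelim}, \(G\) acts on the standard pro-\(p\) tree of the amalgam. Put \(K = \bigcap_\lambda \ker\varphi_\lambda\); it is a closed normal subgroup with \(K \cap F = 1\), since every \(\varphi_\lambda\) is injective on \(F\). It also satisfies \(K \cap (\langle y\rangle\times\langle a\rangle) = 1\): if \(y^\mu a^\nu \in K\) then \(\mu + \lambda\nu = 0\) for all \(\lambda\), so \(\mu=\nu=0\). Hence \(K\) meets every vertex stabilizer trivially and acts freely on the tree, which makes \(K\) a free pro-\(p\) group. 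The goal is then \(K=1\).

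To obtain this I would pass to finite quotients. An element of \(G\) that dies under every \(\varphi_\lambda\) should, through the \(\lambda\)-dependence of \(\varphi_\lambda(g)\), be a ``polynomial identity'' in \(y^\lambda\) inside \(F\). Concretely, write \(g\) as a limit of reduced words \(f_0 a^{\nu_1} f_1 \cdots a^{\nu_k} f_k\) with \(f_i \notin \langle y\rangle\). Then \(\varphi_\lambda(g) = f_0 y^{\lambda\nu_1} f_1 \cdots y^{\lambda\nu_k} f_k\), and the task is a pro-\(p\) ``big powers'' statement: for suitable \(\lambda\) this product is non-trivial in \(F\). This is the step where the special shape \(y = [x_1,x_2]t\) with \(t \in \langle x_2,\ldots,x_n\rangle\) enters. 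Since \(x_1\) occurs in \(y\) only through the commutator, the substitution \(x_1 \mapsto x_1 y^{\lambda}\)-type automorphisms, and the retraction \(F \to \langle x_2,\ldots,x_n\rangle \amalg \langle x_1\rangle\) together with a Magnus/Fox-calculus expansion in \(\bbf_p[\![F]\!]\), should let one read off a leading term in \(\lambda\) that cannot vanish. Taking \(\lambda\) to be a high power of \(p\) times a unit, I would track \(\varphi_\lambda(g)\) modulo the Zassenhaus filtration.

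The main obstacle is exactly this big-powers step. In the discrete case Baumslag uses large integer exponents, but in \(\bbz_p\) the exponents \(\lambda\nu_i\) can cancel \(p\)-adically and reduced words are only limits. My first attempt would be to work in the finite quotients \(F/\Phi^{(m)}(F)\) of the Zassenhaus filtration and use that \(y \equiv [x_1,x_2]t\) has a prescribed leading homogeneous component in the graded Lie algebra. The aim is to show that the lowest-degree term of \(\varphi_\lambda(g)\) is a non-zero polynomial in \(\lambda\) over \(\bbf_p\) of degree less than \(p^{m}\), which therefore has a non-root \(\lambda\).
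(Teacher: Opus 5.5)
The paper does not prove this statement; it imports it from \cite[Thm.~4.1]{kochloukovaFullyResiduallyFree2010}. Your sketch therefore has to stand on its own, and it has a genuine gap.

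Your set-up is fine: the retractions \(\varphi_\lambda\), \(K\cap F=1\), \(K\cap(\langle y\rangle\times\langle a\rangle)=1\), and freeness of \(K\) from the tree action. But this only reformulates the goal. The decisive pro-\(p\) big-powers step is not carried out, and the plan for it never actually uses the hypothesis on \(y\):
\begin{enumerate}[(i)]
    \item the proposed \(x_1\mapsto x_1y^\lambda\) does not fix \(y\) in general;
    \item the ``retraction'' \(F\to\langle x_2,\ldots,x_n\rangle\amalg\langle x_1\rangle\) is just the identity of \(F\);
    \item every concrete step makes sense for an arbitrary self-centralizing \(\langle y\rangle\).
\end{enumerate}
An argument of that generality would prove residual freeness of every centralizer extension of a free pro-\(p\) group. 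That is part of Question~\ref{quest-free-pro-p}, which the paper treats as open; it calls this theorem ``the best tool we have available''.

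Your fallback adds nothing. If \(\alpha\in\mathrm{Aut}(F)\) fixes \(y\) and \(\widetilde\alpha\) extends it by \(a\mapsto a\), then \(\varphi_\lambda\circ\widetilde\alpha=\alpha\circ\varphi_\lambda\), whose kernel is \(\ker\varphi_\lambda\). Likewise the twists \(a\mapsto ay^\mu\) only reindex \(\lambda\).

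Your reduction may also ask for more than the theorem. Any \(\psi\colon G\to E\) into a free pro-\(p\) group with \(\psi(a)\in\overline{\langle\psi(y)\rangle}\) factors as \(\psi|_F\circ\varphi_\lambda\). So \(\bigcap_\lambda\ker\varphi_\lambda=1\) is a priori stronger than residual freeness. In the discrete case it is exactly Baumslag's big-powers lemma, and you give no pro-\(p\) substitute for it.

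Two intermediate steps also fail as stated.

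\emph{Finite normal forms.} Passing to words \(f_0a^{\nu_1}f_1\cdots a^{\nu_k}f_k\) is not legitimate. A general element of \(G\) is only a limit of such words with unbounded \(k\), and the survival of each approximant under some \(\varphi_\lambda\) says nothing about the limit. Put differently, \(K\) meeting the abstract amalgam trivially does not force \(K=1\). A non-trivial closed normal subgroup of a pro-\(p\) group can meet a dense abstract subgroup trivially, e.g.\ the kernel of a surjection from a free pro-\(p\) group onto a \(p\)-adic analytic group that is injective on the abstract free group. Any proof must therefore be uniform: for each open normal \(U\trianglelefteq G\) it should produce finitely many maps to free pro-\(p\) groups whose kernels intersect inside \(U\). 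By compactness, this is equivalent to residual freeness.

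\emph{The Zassenhaus step.} This is circular. Let \(I\) be the augmentation ideal of \(\FFalgebra{F}\). Modulo \(I^m\), the function \(\lambda\mapsto\varphi_\lambda(g)-1\) is locally constant. Hence it has the form \(\sum_{i<p^N}\binom{\lambda}{i}c_i\) with \(c_i\) independent of \(\lambda\), and it vanishes identically iff all \(c_i=0\). So asserting a non-zero leading term is precisely the statement to be proved. Read literally, ``a non-zero polynomial over \(\mathbb{F}_p\) of degree \(<p^m\) has a non-root in \(\mathbb{Z}_p\)'' is false: \(\lambda^p-\lambda\) is a counterexample.

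What is missing is an actual mechanism that exploits the fact that \(x_1\) occurs in \(y\) only through \([x_1,x_2]\), and that controls arbitrary elements of the pro-\(p\) amalgam rather than finite words.
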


With this result in hand, Kochloukova and the second author were able to prove that the pro-\(p\) completion of orientable surface groups of even genus are residually free, and even drop the genus restriction when \(p = 2\). From our construction on Theorem~\ref{thm-double}, one also gets an expanded class of residually free pro-\(p\) groups:

\begin{cor}\label{cor-residually-free-1} Let \(G\) be a Demushkin group with invariants \(n > 2\), \(q\) and \(\mathbb{U}\). If the standard Demushkin relation \(r_1(n/2,q,\mathbb{U})\) is defined, then \(G\) is residually free pro-\(p\).
\end{cor}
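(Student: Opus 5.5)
We will deduce the statement from Corollary~\ref{corollary-embed-ce} together with Theorem~\ref{thm-desi-zalesskii}. Let \(m = n/2\) and \(r = r_1(m,q,\mathbb{U}) \in F = F(x_1,\ldots,x_m)\). By Theorem~\ref{thm-double} and the proof of Corollary~\ref{corollary-embed-ce}, \(G\) embeds as a closed subgroup of the centralizer extension \(E = F \amalg_{\langle r\rangle} (\langle r\rangle \times \bbz_p)\). Residual freeness passes to closed subgroups, so it suffices to show that \(E\) is residually free pro-\(p\). Since \(E\) only depends on \(F\) and the cyclic subgroup \(\langle r\rangle\), we may change the basis of \(F\) or replace \(r\) by a conjugate or a generator of the same procyclic subgroup. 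The plan is to find a basis \(y_1,\ldots,y_m\) of \(F\) in which \(r\), up to these operations, has the form \([y_1,y_2]t\) with \(t \in \langle y_2,\ldots,y_m\rangle\). Theorem~\ref{thm-desi-zalesskii} then applies directly.

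The work is a case analysis on the shape of the standard relation. Replacing \(r\) by \(r^{-1}\) or by a conjugate changes nothing. The identity \(x_1^{a}[x_1,x_2] = x_1^{a-1}x_2^{-1}x_1x_2\) suggests substitutions of the form \(x_2 \mapsto x_2x_1^{k}\) or \(x_1 \mapsto x_2^{k}x_1\) that absorb the power of \(x_1\) into the commutator. For the first standard relation \(x_1^q[x_1,x_2]\cdots\), one can rewrite \(r^{-1}\) (or a cyclic conjugate of \(r\)) so that the leading factor becomes \([x_2,x_1]x_1^{-q}\), suitably rearranged. Swapping the names \(x_1 \leftrightarrow x_2\), we aim at the form \([y_1,y_2]\,y_2^{\pm q}\,[y_3,y_4]\cdots[y_{m-1},y_m]\). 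This lies in \([y_1,y_2]\langle y_2,\ldots,y_m\rangle\), as required.

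The remaining relations are handled the same way:
\begin{itemize}
\item \(x_1^{2+2^f}[x_1,x_2]\cdots\) and \(x_1^2[x_1,x_2]x_3^{2^f}[x_3,x_4]\cdots\): move the power of \(x_1\) to the other side of the commutator and swap \(x_1 \leftrightarrow x_2\), as above.
\item The odd case \(x_1^2x_2^{2^f}[x_2,x_3]\cdots\): cyclically conjugate so that the word starts with \([x_2,x_3]\), giving \([x_2,x_3]\cdots x_1^2x_2^{2^f}\), where the product runs over the remaining commutators. The tail contains \(x_1\), which is not allowed in \(t\), so relabel so that the commutator pair is \((y_1,y_2)\) with \(y_1\) the letter whose powers do not appear in \(t\). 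Concretely, take \(y_1 = x_3\), \(y_2 = x_2\), and put \(x_1\) among \(y_3,\ldots,y_m\), using \(r^{-1}\) if needed to fix the order of the commutator.
\end{itemize}
In each case \(t\) may involve every generator except the first one, which gives considerable freedom.

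The main obstacle is making the manipulations exact rather than merely true modulo \(F_3\). Theorem~\ref{thm-desi-zalesskii} needs the literal form \([y_1,y_2]t\), so each rewriting has to be an honest identity in \(F\) combined with an automorphism of \(F\). The first thing to try is the conjugation trick: \(x_1^{a}[x_1,x_2] = x_1^{a-1}x_2^{-1}x_1x_2\), and conjugating by \(x_1^{-(a-1)}\) or inverting puts the \(x_1\)-power next to the \(x_2\)'s, where it can be absorbed. If the form \([y_1,y_2]t\) cannot be reached exactly for some case, the fallback is to revisit the proof of Theorem~\ref{thm-desi-zalesskii} and check that it only uses that \(y\) maps onto a suitable element under the maps killing \(y_1\).
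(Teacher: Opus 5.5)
Your proposal is correct and follows the paper's route: embed \(G\) in the rank-one centralizer extension of \(F\) along \(\langle r\rangle\), then normalize \(r\) by inversion, conjugation and a change of basis into the form \([y_1,y_2]t\) required by Theorem~\ref{thm-desi-zalesskii}; this is exactly what the paper's ``exchanging \(x_1\) and \(x_2\)'' abbreviates. Your manipulations are exact identities in \(F\) (e.g.\ \(P r^{-1} P^{-1} = [x_2,x_1]\,x_1^{-q}P^{-1}\) with \(P = [x_3,x_4]\cdots[x_{m-1},x_m]\)), so the fallback is unnecessary, and your relabeling \(y_1 = x_3\), \(y_2 = x_2\) for odd \(n/2\) is precisely what that case needs, which the paper's one-line swap glosses over.
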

\begin{proof} Again, we can write \(G\) as a Baumslag double \(F \amalg_{r = \widetilde{r}} \widetilde{F}\) where \(r = r_1(n/2,q,\mathbb{U})\), and in Corollary~\ref{corollary-embed-ce} we've shown that \(G\) then embeds in the centralizer extension \(F \amalg_{\langle r \rangle} (\langle r \rangle \times \bbz_p)\). By exchanging \(x_1\) and \(x_2\), we can apply Theorem~\ref{thm-desi-zalesskii} to conclude that this centralizer extension (and hence \(G\) itself) is residually free pro-\(p\).
\end{proof}

The following extension is also inspired by \cite[Cor. 5.2]{kochloukovaFullyResiduallyFree2010}

\begin{cor}\label{cor-q-is-2} Let \(G\) be a pro-\(2\) Demushkin group with invariants \(n > 2\), \(q\) and \(\mathbb{U}\). If \(n\) is even, then \(G\) is a pro-\(2\) limit group and residually free pro-\(2\).
\end{cor}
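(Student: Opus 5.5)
The plan is to reduce to the case already treated by Corollaries~\ref{corollary-embed-ce} and~\ref{cor-residually-free-1}, using two facts: closed finitely generated subgroups of pro-\(2\) limit groups are pro-\(2\) limit groups, and residual freeness passes to closed subgroups. If \(n \equiv 0 \pmod 4\), then \(r_1(n/2,q,\mathbb{U})\) is defined for every admissible \(\mathbb{U}\), and the two corollaries already give the result. So assume \(n \equiv 2 \pmod 4\) and \(n\geq 6\). The idea is to realise \(G\) as an open subgroup of a Demushkin group \(G''\) whose number of generators \(n''\) is divisible by \(4\).

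\textbf{Choosing the generator count.} An open subgroup \(H\) of index \(2^k\) in a Demushkin group with \(n''\) generators is again Demushkin. By multiplicativity of the Euler characteristic \(2-n''\), it has \(d(H)=2^k(n''-2)+2\) generators. Write \(n-2 = 2^{s}m\) with \(m\) odd; here \(s\geq 2\). Put \(k=s-1\geq 1\) and \(n''=2m+2\). Then \(2^k(n''-2)+2=n\), and \(n''\equiv 0\pmod 4\) with \(n''\geq 4\).

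\textbf{Choosing the ambient group.} Next choose \(G''\) to be the Demushkin group with invariants \(n''\) and the same \(\mathbb{U}\) as \(G\). This exists by Theorem~\ref{thm-classification}, since \(n''\) is even and hence \(r_1(n'',q,\mathbb{U})\) is defined whenever \(r_1(n,q,\mathbb{U})\) is. By Corollaries~\ref{corollary-embed-ce} and~\ref{cor-residually-free-1}, \(G''\) is a pro-\(2\) limit group and is residually free pro-\(2\).

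\textbf{Choosing the subgroup.} It remains to find an open subgroup \(H\le G''\) of index \(2^k\) with \(\chi''(H)=\mathbb{U}\). Then \(\chi_H=\chi''|_H\) has image \(\mathbb{U}\), so \(H\) has invariants \(n\) and \(\mathbb{U}\). Since for \(p=2\) the invariants \(n\) and \(\mathbb{U}\) determine the group, Theorem~\ref{thm-classification} gives \(H\simeq G\), and both conclusions descend to \(G\).

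To build \(H\), work in the standard presentation \(r_1(n'',q,\mathbb{U})\). The calculations in the proof of Theorem~\ref{thm-double} show that \(\chi''\) is nontrivial on at most the generators \(x_1,\dots,x_4\) and is trivial on \(x_i\) for \(i>4\). For \(n''\geq 8\), define \(\psi\colon G''\to \bbz/2^k\) by sending \(x_{n''}\mapsto 1\) and all other generators to \(0\); this respects the relation because \(x_{n''}\) occurs only in the commutator \([x_{n''-1},x_{n''}]\). Then \(H=\ker\psi\) contains \(x_1,\dots,x_4\), so \(\chi''(H)=\mathbb{U}\).

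\textbf{Main obstacle.} The step I expect to be the real difficulty is the case \(n''=4\) with two-generated \(\mathbb{U}\), where \(\chi''\) may be nontrivial on all of \(x_1,\dots,x_4\). There I would try \(\psi(x_3)=1\) and \(\psi(x_j)=0\) otherwise, and check that \(\psi\) is compatible with the relation \(x_1^2[x_1,x_2]x_3^{2^f}[x_3,x_4]\). This requires \(2^f\equiv 0\) modulo \(2^k\), which may force adjusting \(\psi\) on \(x_1\) or choosing a different lift. In any case \(H\) still contains \(x_1,x_2,x_4\) and the conjugate \(x_3x_2x_3^{-1}\), and one checks that their \(\chi''\)-images generate \(\mathbb{U}\). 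If this fails for small \(f\), I would instead take \(n''\) to be a larger multiple of \(4\) and a correspondingly smaller index, which is possible whenever \(n-2\) admits such a factorisation.
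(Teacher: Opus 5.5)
\textbf{Verdict: there is a genuine gap.} Your numerology coincides with the paper's: your \(n''\) and \(k\) are its \(4l\) and \(s\). For procyclic \(\mathbb{U}\) your argument is the paper's, with an explicit choice of open subgroup. When \(n''=4\), just note that \(\chi''\) is nontrivial only on \(x_2\), so \(\psi(x_4)=1\) still works.

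The gap is the case \(q=2\), \(\mathbb{U}=\langle -1,1+2^f\rangle\) with \(2\leq f<\infty\) and \(n\equiv 2\pmod 4\), which you also route through the subgroup construction. This breaks down when \(m=1\), i.e.\ when \(n-2\) is a power of \(2\) (e.g.\ \(n=6,10\)).
\begin{itemize}
\item Since \(4\mid n''\) forces \(n''-2\) to be twice an odd number, \(2^k(n''-2)=n-2\) forces \(n''=4\). So your fallback of enlarging \(n''\) is impossible.
\item More seriously, \(G''\) is then a \(4\)-generated Demushkin group with two-generated \(\mathbb{U}\). Corollaries~\ref{corollary-embed-ce} and~\ref{cor-residually-free-1} do not apply to it, because they need \(r_1(2,2,\mathbb{U})\), which is undefined. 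Indeed, a \(2\)-generated Demushkin group \(\langle x_1,x_2\mid x_1^{q'}[x_1,x_2]\rangle\) has \(\chi(x_1)=1\) (apply \(D_2\)), hence procyclic orientation image.
\item The subgroup step by itself could be repaired. Here \(\chi''\) is trivial on \(x_1,x_3\), and \(\psi(x_1)=-2^{f-1}\), \(\psi(x_3)=1\), \(\psi(x_2)=\psi(x_4)=0\) respects the relation. Its kernel contains \(x_2,x_4\), whose \(\chi''\)-values generate \(\mathbb{U}\). But this does not help, because \(G''\) is not known to be a pro-\(2\) limit group.
\end{itemize}

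The missing observation is the one the paper uses to dispose of this case up front. For \(\mathbb{U}=\langle -1,1+2^f\rangle\) and \(n/2\geq 3\) odd, the last case in the definition of \(r_1\) makes \(r_1(n/2,2,\mathbb{U})\) defined. So both corollaries apply to \(G\) directly, and the open-subgroup trick is needed only for procyclic \(\mathbb{U}\).

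Relatedly, your opening claim that \(r_1(n/2,q,\mathbb{U})\) is defined for every admissible \(\mathbb{U}\) when \(4\mid n\) fails for \(n=4\), \(q=2\), \(\mathbb{U}=\langle -1,1+2^f\rangle\), \(f<\infty\), for the reason just given. The paper's proof opens with the same assertion, as does the remark after Corollary~\ref{corollary-embed-ce}. So that one case is not actually settled by either argument, and it should be excluded or flagged rather than claimed.
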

\begin{proof} If \(n\) is a multiple of \(4\) or \(q = 2\) but \([\mathbb{U}\colon \mathbb{U}^2] = 4\), then we are covered by Corollary~\ref{cor-residually-free-1}. Hence we may assume \(q = 2\), \(\mathbb{U}\) is procyclic and \(n = 2d\) where \(d > 1\) is an odd integer. Therefore, there are unique integers \(s \geq 0\), \(l \geq 1\) such that \(d - 1 = 2^s(2l-1)\). Observe that the standard Demushkin relation \(r_1(4l,2,\mathbb{U})\) is defined, and let \[H = \langle y_1,\ldots,y_{4l}\mid r_1(4l,2,\mathbb{U})\rangle\,.\] It was shown in \cite{labuteClassificationDemushkinGroups1967} that the image of the orientation character of \(H\) is generated by \(\chi(y_2)\) (this also follows from the computation in the proof of Theorem~\ref{thm-double}).

Let \(U\) be an open subgroup of \(H\) of index \(2^s\) containing \(y_2\). We claim that \(U\) is isomorphic to \(G\). If \(U\) is generated by \(m\) elements, then computing Euler characteristics gives: \[\chi(U) = 2 - m = 2^s\chi(H) = 2^s(2-4l) = 2-2d\,,\] from which we conclude that \(m = 2d = n\). Since the orientation character of \(U\) is the restriction of the orientation character of \(H\) by \cite[Prop. I.18]{serreGaloisCohomology1997} and \(y_2 \in U\), we have that \(U\), \(H\) and \(G\) all share the same invariant \(\mathbb{U}\). Hence, by the classification theorem, \(U\) and \(G\) are isomorphic. Since \(H\) is a pro-\(2\) limit group by Corollary~\ref{corollary-embed-ce} and residually free pro-\(2\) by Corollary~\ref{cor-residually-free-1}, so is \(G\).
\end{proof}

We can now prove the first theorem in the introduction:

\begin{proof}[Proof of Theorem~\ref{thm-demushkin-is-limit}] Case (1) follows from Corollaries~\ref{corollary-embed-ce} and ~\ref{cor-residually-free-1}, and Case (2) follows from Corollary~\ref{cor-q-is-2}.
\end{proof}

We now move on to the study of the pro-\(2\) Demushkin group \[D = \langle x_1,x_2,x_3\mid x_1^2[x_2,x_3]\rangle \simeq \langle a,b,c\mid a^2b^2c^2 = 1\rangle\] and the proof of Theorem~\ref{thm-three-gen-is-not}. We start by treating the free quotients of \(D\):

\begin{prop}\label{prop-lyndon} Let \(F\) be a free pro-\(2\) group and \(a,b,c \in F\) be three elements satisfying \(a^2b^2c^2 = 1\). Then, \(\langle a,b,c\rangle\) is abelian.
\end{prop}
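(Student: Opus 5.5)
The plan is to avoid any discrete-to-pro-\(2\) transfer of Lyndon's theorem and argue purely cohomologically, using the cup product description from Proposition~\ref{prop-cup-product}. Let \(H = \langle a,b,c\rangle\) be the closed subgroup of \(F\) generated by the three elements. Since closed subgroups of free pro-\(p\) groups are free pro-\(p\), \(H\) is a free pro-\(2\) group generated by \(3\) elements, hence of rank \(k \leq 3\). Moreover, the assignment \(x \mapsto a\), \(y \mapsto b\), \(z \mapsto c\) from the presentation \(D = \langle x,y,z \mid x^2y^2z^2\rangle\) defines a continuous surjection \(\pi\colon D \to H\). If \(k \leq 1\), then \(H\) is procyclic and hence abelian, so it suffices to rule out \(k = 2\) and \(k = 3\).

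Suppose \(k \geq 2\). Inflation along the surjection \(\pi\) gives an injective map \(\pi^*\colon H^1(H,\bbf_2) \to H^1(D,\bbf_2)\). Its image \(W\) has dimension \(k \geq 2\), since \(H^1(H,\bbf_2)\) is dual to \(H/\Phi(H)\). Inflation is compatible with cup products, and \(H^2(H,\bbf_2) = 0\) because \(H\) is free. Hence \(\pi^*(\alpha)\cup\pi^*(\beta) = \pi^*(\alpha\cup\beta) = 0\) for all \(\alpha,\beta \in H^1(H,\bbf_2)\), so \(W\) is a totally isotropic subspace of \(H^1(D,\bbf_2)\) for the cup product pairing.

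Now compute the cup product form of \(D\). Take the minimal presentation \(D = \langle x_1,x_2,x_3\mid x_1^2[x_2,x_3]\rangle\) with \(q = 2\), \(a_1 = 1\), \(a_2 = a_3 = 0\) and \(b_{23} = 1\), all other \(b_{ij} = 0\). By Proposition~\ref{prop-cup-product}, the form on \(H^1(D,\bbf_2) \cong \bbf_2^3\) has matrix
\[\begin{pmatrix} 1 & 0 & 0\\ 0 & 0 & 1\\ 0 & 1 & 0\end{pmatrix}\]
modulo \(2\). This matrix is symmetric and invertible, so the form is non-degenerate, as it must be for a Demushkin group.

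The key (and only mildly delicate) step is to note that a non-degenerate bilinear form on a \(3\)-dimensional space cannot have a totally isotropic subspace of dimension \(\geq 2\). Indeed, if \(W\) is totally isotropic then \(W \subseteq W^\perp\), and non-degeneracy gives \(\dim W^\perp = 3 - \dim W\), whence \(\dim W \leq 1\). This holds over \(\bbf_2\) for any non-degenerate bilinear form, including non-alternating ones, so the characteristic \(2\) subtlety causes no trouble. It contradicts \(\dim W = k \geq 2\). Therefore \(k \leq 1\), and \(\langle a,b,c\rangle\) is procyclic, in particular abelian.
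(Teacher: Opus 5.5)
Your proof is correct, but it takes a different route from the paper's proof of this proposition. The paper works with explicit finite quotients. If \(\{a,b,c\}\) were a basis of \(H\), sending \(a,b,c\) to three elementary unitriangular matrices \(A,B,C\) in \(\mathrm{GL}_3(\mathbb{Z}/4\mathbb{Z})\) contradicts \(A^2B^2C^2\neq 1\). After a symmetry reduction to \(H=\langle a,b\rangle\), the same matrices would make \(A^2B^2=f(c^{-1})^2\) a square in \(f(H)\), which a direct computation rules out. You instead use only that \(D\) is Demushkin: the inflated image of \(H^1(H,\mathbb{F}_2)\) is a totally isotropic subspace for a non-degenerate form on \(\mathbb{F}_2^3\), so it has dimension at most \(1\). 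This is exactly the mechanism behind Sonn's bound \(n\geq 2k\), which the paper mentions in the remark after the proposition as an alternative explanation. Your version treats ranks \(2\) and \(3\) uniformly and needs no ``without loss of generality'' reduction to \(\langle a,b\rangle\). It also generalizes at once to surjections from any Demushkin group onto free pro-\(p\) groups. The paper's version is more elementary: it needs only the universal property of free pro-\(2\) groups, the Frattini argument and a hand computation with unitriangular matrices mod \(4\), with no cup products. One small tidying point: you define \(\pi\) on the presentation \(x^2y^2z^2\) but compute the form from \(x_1^2[x_2,x_3]\). This is harmless, since the form is intrinsic, the two presentations give isomorphic groups, and non-degeneracy is part of being Demushkin. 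Still, you could apply Proposition~\ref{prop-cup-product} directly to \(r=x^2y^2z^2\), where \(a_1=a_2=a_3=1\) and all \(b_{ij}=0\), which gives the identity matrix mod \(2\).
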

\begin{proof} Let \(H\) be the closed subgroup of \(F\) generated by \(a\), \(b\) and \(c\). It is free pro-\(p\), and any set whose image in \(H/H^2\) is a basis is a free generating set for \(H\). If \(\{a,b,c\}\) were a free generating set for \(H\), we could obtain a homomorphism \(H \to \mathrm{GL}_3(\bbz/4\bbz)\) given by \[a \mapsto A = \begin{pmatrix}
    1 & 1 & 0\\
    0 & 1 & 0\\
    0 & 0 & 1\\
\end{pmatrix}\,,\quad b \mapsto B = \begin{pmatrix}
    1 & 0 & 0\\0 & 1 & 1\\
    0 & 0 & 1
\end{pmatrix}\,,\quad c \mapsto C = \begin{pmatrix}
    1 & 0 & 1\\0 & 1  & 0\\0 & 0 & 1
\end{pmatrix}\,.\] However, one readily checks that the matrix \(A^2B^2C^2\) is not the identity matrix, yielding a contradiction. Hence, there is a linear dependence between the \(a,b,c\) modulo \(H^2\). Without loss of generality, we can then assume that \(H\) is generated by \(a\) and \(b\). If \(\{a,b\}\) were a free basis of \(H\), we could also obtain a homomorphism \(f\colon H \to \mathrm{GL}_3(\bbz/4\bbz)\) by mapping \(a\) to \(A\) and \(b\) to \(B\). Since the matrix \(A^2B^2\) has no square root in \(f(H)\), this also gives a contradiction. Therefore, \(a\) and \(b\) are linearly dependent modulo \(H^2\) and \(H\) must be procyclic.
\end{proof}

\begin{rmk} Proposition~\ref{prop-lyndon} is a pro-\(2\) version of a well known result of Lyndon on discrete free groups (\cite{lyndonEquation$a^2b^2c^2$Free1959}). The fact that we can use a Frattini argument makes the pro-\(2\) proof quite shorter than that for the corresponding abstract statement. This can also be seen as a consequence of a general result of J. Sonn (\cite{sonnEpimorphismsDemushkinGroups1974}): if \(f\colon G \to F\) is a surjective homomorphism from an \(n\)-generated Demushkin group \(G\) onto a free pro-\(p\) group of rank \(k\), then \(n \geq 2k\).
\end{rmk}

Before proving Theorem~\ref{thm-three-gen-is-not}, we need a lemma.

\begin{lem}\label{lem-action-acylindrical} Let \(\Gamma\) be the standard Bass-Serre tree associated with a centralizer extension $$(G_n = G_{n-1} \amalg_C A),$$ where \(C\) is a self-centralizing procyclic subgroup of a pro-\(p\) limit group \(G_{n-1}\) and \(A\simeq \bbz_p^k\) contains \(C\) as a direct summand. Then, for any \(H \leq G_n\), the action of \(H\) on \(\Gamma\) is \(2\)-acylindrical.
\end{lem}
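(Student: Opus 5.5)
It suffices to prove that the action of \(G_n\) itself on \(\Gamma\) is \(2\)-acylindrical, since the fixed subtree of a non-trivial \(g \in H\) does not depend on whether we regard \(g\) as an element of \(H\) or of \(G_n\). The standard Bass-Serre tree of \(G_{n-1}\amalg_C A\) has vertex set \(G_n/G_{n-1} \sqcup G_n/A\) and edge set \(G_n/C\). The incidences are \(gC \mapsto gG_{n-1}\) and \(gC \mapsto gA\). So the tree is bipartite, and every edge joins a vertex of type \(G_{n-1}\) to a vertex of type \(A\). Edge stabilizers are the conjugates \(gCg^{-1}\).

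The plan is to show that a non-trivial \(g\) cannot fix a path of length \(3\). Such a path must contain two consecutive edges \(e_1,e_2\) meeting at a common vertex \(v\). The key step is to analyse the stabilizer of two distinct edges at \(v\), by the type of \(v\).

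\emph{Vertex of type \(A\).} After translating we may take \(v = A\). The edges at \(v\) are \(aC\) for \(a\in A\), with stabilizers \(aCa^{-1} = C\), since \(A\) is abelian. So every edge at such a vertex has stabilizer exactly \(C\). This is why \(A\)-vertices cannot serve as the "turning point" for bounding the diameter, and the bound must come from the \(G_{n-1}\)-vertices.

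\emph{Vertex of type \(G_{n-1}\).} We may take \(v = G_{n-1}\). The edges at \(v\) are \(hC\) for \(h\in G_{n-1}\), with stabilizers \(hCh^{-1}\). Suppose \(1\neq g\in C\cap hCh^{-1}\) with \(hC\neq C\), that is, \(h\notin C\). We want a contradiction. Since \(G_{n-1}\) is a pro-\(p\) limit group, it is torsion-free and commutative-transitive (any \(2\)-generated subgroup is free pro-\(p\) or \(\bbz_p^2\)). So the centralizer of \(g\) is abelian and contains both \(C\) and \(hCh^{-1}\). Since \(C\) is self-centralizing, \(C_{G_{n-1}}(g)=C\), so \(hCh^{-1}\le C\). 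Now \(hch^{-1}\) and \(c\) generate a subgroup of \(C\), hence an abelian group. By the \(2\)-generator property, \(\langle h,c\rangle\) for a generator \(c\) of \(C\) is either free pro-\(p\) or \(\bbz_p^2\). A conjugation relation \(hch^{-1}=c^\lambda\) is impossible in a free pro-\(p\) group unless \(h\) and \(c\) commute, and in \(\bbz_p^2\) it forces commutation. Either way \(h\) centralizes \(c\), so \(h\in C\), a contradiction. This malnormality of \(C\) in \(G_{n-1}\) is the main point. I expect it to be the main obstacle, as it needs care in the pro-\(p\) setting: either the \(2\)-generator property or the known malnormality of self-centralizing procyclic subgroups in pro-\(p\) limit groups, which I would invoke from \cite{kochloukovaPropAnaloguesLimit2011}.

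Now take a path of length \(3\) fixed pointwise by \(g\neq 1\). Because the tree is bipartite, the two middle vertices of such a path are one of type \(G_{n-1}\) and one of type \(A\). The \(G_{n-1}\)-vertex is interior to the path, so it has two distinct incident edges on the path, both fixed by \(g\). By the previous paragraph this forces \(g=1\), a contradiction. Hence fixed subtrees have diameter at most \(2\). Here the fixed set \(\Gamma^g\) is a subtree, and a subtree of diameter greater than \(2\) contains a geodesic of length \(3\), by the standard fact for pro-\(p\) trees that fixed sets are subtrees and that any two vertices are joined by a unique geodesic chain.
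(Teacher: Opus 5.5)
Your proof is essentially the paper's: the tree is bipartite, so any path of length \(3\) has an interior \(G_{n-1}\)-vertex. Distinct edges at such a vertex have trivially intersecting stabilizers, because the \(2\)-generator property forces \(\langle h,c\rangle\) to be abelian, and \(C\) is self-centralizing.

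One correction is needed. Your detour through commutative transitivity is unnecessary, and the justification you give for it is invalid. The \(2\)-generator property alone does not imply commutative transitivity. For example, \(F\times\mathbb{Z}_p\) with \(F\) free pro-\(p\) of rank \(2\) has every \(2\)-generated subgroup free pro-\(p\) or \(\mathbb{Z}_p^2\), yet the central element \((1,1)\) has non-abelian centralizer.

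Skip that step, as the paper does. From \(g\in C\cap hCh^{-1}\) you directly get \(hc^{b}h^{-1}=c^{a}\) with \(a,b\neq 0\). Apply the \(2\)-generator property to \(\langle h,c\rangle\). If it were free of rank \(2\), then \(\{h,c\}\) would be a basis, and abelianizing gives \(a=b\). Then \(h\) and \(c\) both lie in the procyclic centralizer of \(c^{a}\), which is a contradiction. Hence \(\langle h,c\rangle\) is abelian, so \(h\in C\). The rest of your argument stands as written.
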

\begin{proof} We recall that \(\Gamma\) is the pro-\(p\) tree with vertex set \(G_n/G_{n-1} \cup G_n/A\) and edge set \(G_n/C\), and an edge \(gC\) is connected to the vertices \(gG_{n-1}\) and \(gA\) for \(g \in G_n\). The star around a vertex \(gG_{n-1}\) consists of edges of the form \(ghC\) for \(h \in G_{n-1}\), and \(gh_1C = gh_2C\) if and only if \(h_1C = h_2C\) in \(G_{n-1}/C\). If their stabilizers \(gh_1Ch_1^{-1}g^{-1}\) and \(gh_2Ch_2^{-1}g^{-1}\) intersected non-trivially, then there would be non-zero integers \(k_1,k_2\) such that \[h_2^{-1}h_1c^{k_1}h_1^{-1}h_2 = c^{k_2}\] where \(C = \langle c \rangle\). Therefore, the \(2\)-generated subgroup \(\langle h_2^{-1}h_1, c\rangle\) of the pro-\(p\) limit group \(G_{n-1}\) must be abelian and hence \(h_2^{-1}h_1\) centralizes \(c\). This implies that \(h_1C = h_2C\) as \(C\) is self-centralized in \(G_{n-1}\), and so distinct edges in the star around \(gG_{n-1}\) have trivially intersecting stabilizers. Since any segment of length \(\geq 3\) in \(\Gamma\) contains a vertex \(gG_{n-1}\) as one if its middle vertices, it must have trivial stabilizer in \(G_n\).
\end{proof}

Moreover, since the edge stabilizers \(\mathrm{Stab}_{G_n}(e)\) for \(e \in E(\Gamma)\) are procyclic, so they are for any subgroup \(H \leq G_n\). Hence, Proposition~\ref{prop-finitely-many-stabilizers} says that if \(H \leq G_n\) is finitely generated, we have only finitely many maximal vertex stabilizers up to conjugation and hence Theorem~\ref{thm-chatzidakis} applies. We are now ready to prove the following theorem:

\begin{proof}[Proof of Theorem~\ref{thm-three-gen-is-not}] Let \(G\) be a pro-\(2\) limit group and \(\varphi\colon D \to G\) a group homomorphism. We shall proceed by induction on the weight \(n\) of \(G\), with the case \(n=0\) being a consequence of Proposition~\ref{prop-lyndon}. We now assume that \(G\) has weight \(n > 0\), and by further composing \(\varphi\) with an embedding of \(G\) into an iterated centralizer extension we can assume that \[G = G_{n-1} \amalg_C A\] where \(G_{n-1}\) is a limit pro-\(2\) group of weight \(n-1\), \(C\) is a self-centralized procyclic subgroup of \(G_{n-1}\) and \(A\) is a finitely generated free abelian pro-\(p\) group containing \(C\) as a direct factor. We denote by \(H = \varphi(D)\) the image of \(D\) in \(G\).

Let \(S(G)\) be the standard Bass-Serre pro-\(p\) tree associated with this splitting of \(G\), where we choose an orientation of \(S(G)\) such that \(d_0(gC) = gA\) and \(d_1(gC) = gG_{n-1}\). By \cite[Lemma 3.11]{horizons}, \(S(G)\) contains a unique minimal \(H\)-invariant pro-\(p\) subtree \(\Gamma\). Moreover, by a combination of the previous remark, Lemma~\ref{lem-action-acylindrical}, Proposition~\ref{prop-finitely-many-stabilizers} and Theorem~\ref{thm-chatzidakis}, $\Delta=H\backslash \Gamma$ is finite and  all vertex stabilizers are finitely generated. So by \cite[Proposition 4.4]{ZM} or \cite[Theorem 6.6]{ribesProfiniteGraphsGroups2017} $H$ is the fundamental pro-$p$ group $\pi_1(\mathcal{H},\Delta)$ of a graph of finitely generated pro-$p$ groups with cyclic edge groups. 

The stabilizers of vertices of the \(H\)-action on \(\Gamma\) are subgroups of the form \(H \cap gG_{n-1}g^{-1}\) and \(H\cap gAg^{-1}\) for some \(g \in G\). Observe we have a retraction \(\pi\colon G \to G_{n-1}\) induced from the retraction \(A \to C\) that is injective on all \(G\)-conjugates of \(G_{n-1}\). By the inductive hypothesis, the image of \(\pi\circ \varphi\) is abelian. Since \[\pi(H\cap gG_{n-1}g^{-1}) \subseteq \pi(\varphi(D))\,,\] we conclude that all vertex stabilizers of the \(H\)-action on \(\Gamma\) must be abelian.

We let \(T\) be a maximal subtree of \(\Delta\) and \(\sigma\colon \Delta \to \Gamma\) be a fundamental \(0\)-section with respect to \(T\). If \(e \in \Delta\) is an edge for which \(\mathcal{H}(e) = \mathrm{Stab}_H(\sigma(e)) = \{1\}\), then collapsing all other edges in \(\Delta\) we see that either \(H\) is freely decomposable or \(e\) is a bridge and the induced subgraph of groups on one component of \(\Delta \smallsetminus \{e\}\) has trivial fundamental group. In the first scenario, \(H\) must either be free pro-\(2\) of rank \(\leq 3\), which cannot happen by Proposition~\ref{prop-lyndon}, or be isomorphic to a free product of the form \(\mathbb{Z}_2^2 \amalg \mathbb{Z}_2\), which would lead to a surjection \[D/[D,D] \simeq \mathbb{Z}_2^2 \times \mathbb{Z}/2\mathbb{Z} \to H/[H,H] \simeq \mathbb{Z}_2^3\,,\] again a contradiction. Hence, any edge \(e \in \Delta\) for which \(\mathcal{H}(e) = 1\) is a bridge with one of its components having trivial fundamental group. However, since \(\Gamma\) is a minimal \(H\)-invariant subtree, such edges cannot exist by \cite[Lem. 3.1]{KZ2025}.

We now consider what happens if a non-trivial edge can be reduced, that is, if one of the maps \(\partial_0\colon \mathcal{H}(e) \to \mathcal{H}(d_0(e))\) or \(\partial_1\colon \mathcal{H}(e) \to \mathcal{H}(d_1(e))\) is an isomorphism. If \(\partial_1\) is an isomorphism and \(d_1(e)\) is a vertex lying on a \(G\)-orbit of a coset of \(G_{n-1}\), say \(\sigma(e) = gC\) with \(\sigma(d_1(e)) = gG_{n-1}\) for some \(g \in G\), then we claim that \(e\) is the only edge in \(\Delta\) incident to  \(d_1(e)\). Indeed, if \(e' \in \Delta\) is another edge for which \(d_1(e') = d_1(e)\), we must have \(\sigma(e') = g'C\) for some \(g' \in G\) as in Figure~\ref{fig:d0-isomorphism} and hence 
\begin{align*}
    H \cap g'C{g'}^{-1} &= \mathrm{Stab}_H(\sigma(e'))\\
    &= \mathcal{H}(e')\\
    &\leq \gamma_{e'}\mathcal{H}(d_1(e'))\gamma_{e'}^{-1} = \gamma_{e'}\mathcal{H}(d_1(e))\gamma_{e'}^{-1}\\
    &= \gamma_{e'}\gamma_e^{-1}\mathcal{H}(e)\gamma_e\gamma_{e'}^{-1}\\
    &= \gamma_{e'}\gamma_e^{-1}\mathrm{Stab}_H(\sigma(e))\gamma_e\gamma_{e'}^{-1}\\
    &= H\cap \gamma_{e'}\gamma_e^{-1}gCg^{-1}\gamma_e\gamma_{e'}^{-1}\,.
\end{align*}
Hence, if \(C = \langle c \rangle\), there are non-zero integers \(k_1\) and \(k_2\) such that \(hc^{k_1}h^{-1} = c^{k_2}\) where \[h = g'^{-1}\gamma_{e'}\gamma_e^{-1}g^{-1} \in G_{n-1}\] since \(\gamma_{e'}^{-1}g'G_{n-1} = \gamma_e^{-1}gG_{n-1}\). This implies that the \(2\)-generated subgroup \(\langle h, c\rangle\) of \(G_{n-1}\) is abelian, and since \(C\) is self-centralized in \(G_{n-1}\), this can only happen when \(h \in C\). However, we have \[h \in C \iff g'C = (\gamma_{e'}\gamma_e^{-1})gC\,,\] and thus \(\sigma(e) = gC\) and \(\sigma(e') = g'C\) are in the same \(G\)-orbit, showing that \(e = e'\).

\begin{figure}[!ht]
    \centering

\newlength{\bssep}
\setlength{\bssep}{8.2cm}
\addtolength{\bssep}{2em} 

\begin{tikzpicture}[
  >=Latex,
  font=\small,
  vfilled/.style={circle, fill=black, inner sep=1.6pt},
  vopen/.style={circle, draw, fill=white, inner sep=2.2pt},
  dashedpath/.style={densely dashed, thick},
  stable/.style={->, thick},
  midarrow/.style={
    thick,
    postaction={decorate},
    decoration={markings, mark=at position .5 with {\arrow{Latex}}}
  }
]

\def\xL{-2.85}
\def\xB{0.00}
\def\xR{1.35}
\def\xRR{3.10}

\begin{scope}[xshift=0cm]

  \node[vopen]   (L1) at (\xL,  2.6) {};
  \node[vfilled] (L0) at (\xL,  1.0) {};
  \node[vfilled] (B)  at (\xB,  0.0) {};
  \node[vfilled] (R0) at (\xR,  1.6) {};
  \node[vopen]   (R1) at (\xRR, 2.5) {};

  \node[above right=1pt] at (L1) {$d_1(\sigma(e'))$};
  \node[right=7pt]       at (L0) {$d_0(\sigma(e'))$};
  \node[below=5pt]       at (B)  {$\sigma(d_1(e)) = \sigma(d_1(e'))$};
  \node[below right=1pt] at (R0) {$d_0(\sigma(e))$};
  \node[above right=1pt] at (R1) {$d_1(\sigma(e))$};

  \draw[midarrow] (L0) -- node[midway, right=2pt] {$\sigma(e')$} (L1);
  \draw[midarrow] (R0) -- node[midway, below=2pt] {$\sigma(e)$}  (R1);

  \draw[dashedpath] (L0) -- (B);
  \draw[dashedpath] (R0) -- (B);

  \draw[stable] (B) to[bend right=35]
    node[midway, below=2pt] {$\gamma_{e'}$} (L1);

  \draw[stable] (B) to[bend left=45]
    node[midway, above=2pt] {$\gamma_{e}$} (R1);

\end{scope}

\begin{scope}[xshift=\bssep]

  \node[vopen]   (L1b) at (\xL,  2.6) {};
  \node[vfilled] (L0b) at (\xL,  1.0) {};
  \node[vfilled] (Bb)  at (\xB,  0.0) {};
  \node[vfilled] (R0b) at (\xR,  1.6) {};
  \node[vopen]   (R1b) at (\xRR, 2.5) {};

  \node[above right=1pt] at (L1b) {$g'G_{n-1}$};
  \node[right=7pt]       at (L0b) {$g'A$};

  \node[below=5pt, align=center, text width=5.0cm] at (Bb)
    {$\gamma_{e'}^{-1}g' G_{n-1}=\gamma_{e}^{-1}g G_{n-1}$};

  \node[below right=1pt] at (R0b) {$gA$};
  \node[above right=1pt] at (R1b) {$gG_{n-1}$};

  \draw[midarrow] (L0b) -- node[midway, right=2pt] {$g'C$} (L1b);
  \draw[midarrow] (R0b) -- node[midway, below=2pt] {$gC$}  (R1b);

  \draw[dashedpath] (L0b) -- (Bb);
  \draw[dashedpath] (R0b) -- (Bb);

  \draw[stable] (Bb) to[bend right=35]
    node[midway, below=2pt] {$\gamma_{e'}$} (L1b);

  \draw[stable] (Bb) to[bend left=45]
    node[midway, above=2pt] {$\gamma_{e}$} (R1b);

\end{scope}

\end{tikzpicture}

    \caption{What happens in \(\Gamma\) when a terminal vertex has two incident edges in \(\Delta\). Filled in vertices are in the image of \(\sigma\), hollowed out vertices may be not. Left picture has the labels induced from the map \(\sigma\), right picture shows the labels as \(G\)-cosets of \(A\), \(C\) and \(G_{n-1}\).}
    \label{fig:d0-isomorphism}
\end{figure}
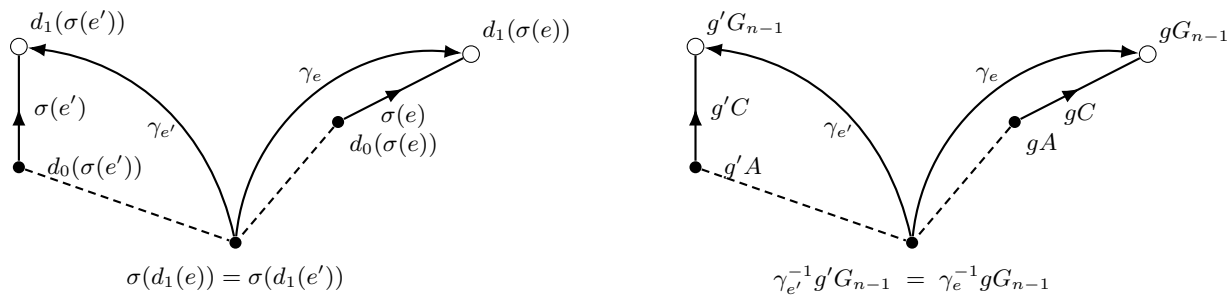

Hence, if an edge can be reduced into a vertex in the \(G\)-orbit of a coset of \(G_{n-1}\), that vertex must be hanging alone on \(\Delta\). We now claim that this is the case for all vertices in the \(G\)-orbit of a coset of \(G_{n-1}\). If \(e\) is an edge in \(\Delta\) with \(\sigma(e) = gC\), then \(\mathcal{H}(e) = H \cap gCg^{-1}\) is non-trivial. The map \(\mathcal{H}(e) \to \mathcal{H}(d_1(e))\) is given by the inclusion \(H \cap gCg^{-1} \to H\cap gG_{n-1}g^{-1}\) followed by conjugation with \(\gamma_e\). Since \(H\cap gG_{n-1}g^{-1}\) is abelian, any element in it must centralize \(gCg^{-1}\) and therefore be also contained in \(gCg^{-1}\). This shows that \(\partial_1\) is always an isomorphism, and therefore all vertices in the \(G\)-orbit of a coset of \(G_{n-1}\) must be hanging alone on \(\Delta\). However, such edges cannot exist by \cite[Lemma 3.1]{KZ2025} since $\Gamma$ is minimal, so \(\Delta\) must consist of a single vertex \(v\) in the \(G\)-orbit of a coset of \(A\), meaning that \(H = \mathcal{H}(v) = H \cap gAg^{-1}\) is abelian.
\end{proof}

We finish with a remark on fully residually free pro-\(p\) groups.

\begin{rmk} As it was shown in \cite[Thm. 7.1]{kochloukovaFullyResiduallyFree2010} a residually free pro-\(p\) group is fully residually free pro-\(p\) if and only if it is commutative transitive, that is, the centralizer of any non-trivial element is abelian. Since any closed subgroup of a Demushkin group is free pro-\(p\) or of finite index, it is an easy exercise to show that the only Demushkin group which is not commutative transitive is \(\bbz_2 \rtimes \bbz_2\) where \(\bbz_2\) acts on itself by multiplication by \(-1\). Hence, any residually free pro-\(p\) Demushkin group is also fully residually free pro-\(p\).
\end{rmk}

\bibliographystyle{alpha}
\bibliography{biblio}

\end{document}